\newtheorem{proposition}{Proposition}[section]
\newtheorem{theorem}[proposition]{Theorem}
\newtheorem{corollary}[proposition]{Corollary}
\newtheorem{lemma}[proposition]{Lemma}
\newtheorem{definition}[proposition]{Definition}
\newtheorem{remark}[proposition]{Remark}
\newtheorem{condition}[proposition]{Condition}
\newtheorem{rem}[proposition]{Remark}
\newcommand{\xbr}[1]{\left\{#1\right\}}
\newcommand{\x}{X^{ \epsilon,\delta} }
\newcommand{\eps}{\epsilon}
\newcommand{\R}{\mathbb{R}}
\newcommand{\Span}{\mathop{\rm span}}
\newcommand{\one}{\mathbf{1}}
\newcommand{\Pp}{\mathbf{P}}
\newcommand{ \parbar}[1]{ { \left( #1 \right)} }
\newcommand{\ONE}{{\bf 1}}
\newcommand{\PpConv}{\stackrel{\Pp}{\longrightarrow}}
\numberwithin{equation}{section}
\numberwithin{proposition}{section}
\begin{document}

\title{Scaling Limits and Exit Law for Multiscale Diffusions}

\author{Sergio Angel Almada \\
  \multicolumn{1}{p{.7\textwidth}}{\centering\emph{Department of Statistics and Operations Research, University of North Carolina,
 304 Hanes Hall CB \#3260, Chapel Hill, NC 27599, email: salmada3@unc.edu}}\\ \\
  Konstantinos Spiliopoulos\footnote{Corresponding author} \\
  \multicolumn{1}{p{.7\textwidth}}{\centering\emph{Department of Mathematics \& Statistics, Boston University,
 111 Cummington Street, Boston MA 02215,\\  email: kspiliop@math.bu.edu}}}

%\author{Konstantinos Spiliopoulos}
%\address{Department of Mathematics \& Statistics\\
%Boston University\\
%111 Cummington Street, Boston MA 02215}
%\email{kspiliop@math.bu.edu}

%\author{Sergio Angel Almada}
%\address{	
%Department of Statistics and Operations Research\\
%University of North Carolina\\
%304 Hanes Hall CB \#3260, Chapel Hill, NC 27599}
%\email{salmada3@unc.edu}
%
%\author{Konstantinos Spiliopoulos}
%\address{Department of Mathematics \& Statistics\\
%Boston University\\
%111 Cummington Street, Boston MA 02215}
%\email{kspiliop@math.bu.edu}

\date{\today}

\maketitle

\begin{abstract}
In this paper we study the fluctuations from the limiting behavior of small noise random perturbations of diffusions with multiple scales. The result is then applied to the exit problem for multiscale diffusions,  deriving the limiting law of the joint distribution of the exit time and exit location. We  apply our results to the first order Langevin equation in a rough potential, studying both fluctuations around the typical behavior and  the conditional limiting exit law, conditional on the rare event of going against the underlying deterministic flow.
\end{abstract}
\section{Introduction}

Let $T>0$ be given and consider a small random perturbation of dynamical system by a Wiener process. In particular, consider the $d$-dimensional process $X^{\epsilon}=\{X_{t}^{\epsilon},0\leq t\leq T\}$ satisfying the stochastic
differential equation (SDE)
\begin{equation}
dX_{t}^{\epsilon}=b^{\epsilon}\left(  X_{t}^{\epsilon}\right)  dt+\sqrt{\epsilon}\sigma^{\epsilon}\left(X_{t}^{\epsilon}\right)
dW_{t},\hspace{0.2cm}X_{0}^{\epsilon}=x_{0}, \label{Eq:DynamicalSystemMotivation}%
\end{equation}
where  $\epsilon\downarrow0$ and $W_{t}$ is a standard $d$-dimensional Wiener process. The functions
$b^{\epsilon}(x),\sigma^{\epsilon}(x)$ are assumed to be sufficiently smooth.

If $b^{\epsilon}(x)\rightarrow b(x)$ and $\sigma^{\epsilon}(x)\rightarrow\sigma(x)$ as $\epsilon\downarrow 0$, where $b(x)$ and  $\sigma(x)$ are nice functions, then asymptotic behavior such as law of large numbers, central limit theorems and large deviations have been extensively studied in the literature,e.g., \cite{Freidlin1978,FWBook} and the references therein. Scaling limits of (\ref{Eq:DynamicalSystemMotivation}) under the effect of different perturbations of the dynamics and of the initial condition are also studied in the recent article \cite{SergioBakhtin2011}.

In this article, we assume that the functions $b^{\epsilon}(x)$ and $\sigma^{\epsilon}(x)$ are fast oscillating, in particular we set $b^{\epsilon}(x)=\frac{\epsilon}{\delta}b\left(x,\frac{x}{\delta}\right)+c\left(x,\frac{x}{\delta}\right)$ and $\sigma^{\epsilon}(x)=\sigma\left(x,\frac{x}{\delta}\right)$, where $\delta=\delta(\epsilon)\downarrow0$ as $\epsilon\downarrow0$. The functions $b(x,y),c(x,y)$ and $\sigma(x,y)$ are assumed to be smooth and periodic with period $\rho$ in every direction with respect to the second variable. Homogenization of such equations has been studied extensively in the literature, see for example \cite{BLP,PS}. Large deviations were studied in~\cite{DupuisSpiliopoulos, FS} and related importance sampling schemes were developed in \cite{DupuisSpiliopoulosWang,DupuisSpiliopoulosWang2}. Moreover, special cases of this general equation (e.g., with $b(x,y)=-\nabla Q(y), c(x,y)=-\nabla V(x)$ and $\sigma(x,y)=\textrm{constant}$) have been suggested as models for studying rough energy landscapes that describe certain proteins and  their folding and binding properties.  A representative, but by no means complete, list of references is~\cite{DupuisSpiliopoulosWang2,Janke,Zwanzig}.

Our goal in this paper is twofold. First, we study scaling limits  under different perturbations of the drift and of the initial condition. We are interested in fluctuations around the typical behavior of $X^\eps _{t}$ as $\epsilon,\delta\downarrow 0$ when both the initial condition and the drift follow a scaling limit in finite time. It turns out that depending on the scaling and on the order that $\epsilon$ and $\delta$ go to zero, we have different limiting behavior. The result is presented in Theorem~\ref{T:CLT2}. It is interesting to note that, in contrast to the case without fast oscillations, in the case considered here, additional drift terms may appear in the equation that the fluctuation process satisfies, see Remark \ref{R:AdditionalTerms}. At this point we mention the articles
~\cite{DupuisSpiliopoulos,Freidlin1978,FS,Guillin,KlebanerLipster} for some related moderate and large deviations results, even though the fluctuations analysis done in the current paper is not covered, as far as the authors know, by the existing literature. The analysis of these scaling limits is summarized in Theorem \ref{T:CLT2} and allows us then to study the exit distribution in the limit as $\epsilon,\delta\downarrow 0$ (see Theorem \ref{thm: Main}) in the case in which the typical behavior of $X^\eps$ exits the domain transversally in finite time.

Another byproduct of this analysis is the study of the effect that perturbations by small but fast oscillations of  small noise  dynamical system have on exit time for such diffusions conditioned on rare events, see Theorem \ref{T:ConditionalExitTime}. We investigate this question in the case of the first order Langevin equation for both a periodic and for a random rough potential, see Remark \ref{R:RandomEnvironment}. It turns out that the limiting law of the exit time conditioned on the event of going against the deterministic flow, appropriately normalized, follows Gaussian distribution with enhanced variance (as compared to the small noise not oscillating case) due to the fast oscillations, see Remark \ref{R:ConclusionsRoughPotential}.

The rest of the paper is organized as follows. In Section \ref{S:Notation} we establish notation and mention examples and preliminary results that will be used throughout.
Section \ref{S:CLT} contains the corresponding central limit theorem, whereas Section~\ref{S:Exit} contains the analysis of the joint limiting law for the exit time and exit point. In Section~\ref{S:ConditionalExitLaw} we apply the results of Sections~\ref{S:CLT} and~\ref{S:Exit} to the first order Langevin equation in a rough environment. In particular, we state the
related central limit theorem and study the conditional exit law of a one dimensional small noise diffusion process in a rough environment in the limit as the fluctuations and noise intensity go to zero.

\section{The set-up}\label{S:Notation}

Let $T>0$ be given and consider the $d$-dimensional process $X^{\epsilon}%
\doteq\{X_{t}^{\epsilon},0\leq t\leq T\}$ satisfying the stochastic
differential equation (SDE)
\begin{equation}
dX_{t}^{\epsilon}=\left[  \frac{\epsilon}{\delta}b\left(  X_{t}^{\epsilon
},\frac{X_{t}^{\epsilon}}{\delta}\right)  +c\left(  X_{t}^{\epsilon}%
,\frac{X_{t}^{\epsilon}}{\delta}\right)  +\epsilon^{a_{1}/2}\Psi^\eps \left(  X_{t}^{\epsilon}%
,\frac{X_{t}^{\epsilon}}{\delta}\right)\right]  dt+\sqrt{\epsilon}%
\sigma\left(  X_{t}^{\epsilon},\frac{X_{t}^{\epsilon}}{\delta}\right)
dW_{t}, \label{Eq:LDPandA1}%
\end{equation}
with initial condition given by $X_{0}^{\epsilon}=x_{0}+\epsilon^{a_{2}/2}\xi^{\epsilon}$. Here $\xi^\eps$ is a family of random variables that converges in distribution to $\xi^0$ as $\eps \to 0$, $\delta=\delta(\epsilon)\to 0$ as $\epsilon\to 0$ and $W_{t}$ is a standard $d$-dimensional Wiener process. Also, we assume that the functions $b , c, \Psi^\eps$ and $\sigma$ satisfy the following conditions:
\begin{condition}
\label{A:Assumption1}
\begin{enumerate}
\item The functions $b(x,y),c(x,y),\sigma(x,y)$, and $\Psi^\eps(x,y)$ are, for each $\eps >0$, periodic with period $\rho$ in the second variable, $C^{1}(\mathcal{Y})$ in $y$ and $C^{2}(\mathbb{R}^{d})$ in $x$ with all partial derivatives continuous and globally bounded in both variables. Here $\mathcal{Y}=\mathbb{T}^{d}$ denotes the $d$-dimensional torus.
\item As $\eps \to 0$ $\Psi^\eps \to \Psi$ uniformly in each variable and $\Psi$ satisfies the same regularity conditions as any $\Psi^\eps$.
\item The diffusion matrix $\sigma\sigma^{T}$ is uniformly nondegenerate.
\end{enumerate}
\end{condition}

We are interested in the  following cases of interaction
\begin{equation}
\lim_{\epsilon\downarrow0}\frac{\epsilon}{\delta}=%
\begin{cases}
\infty & \text{Regime 1,}\\
\gamma\in(0,\infty) & \text{Regime 2,}
\end{cases}
\label{Def:ThreePossibleRegimes}%
\end{equation}
Here $\gamma$ is taken to be $\gamma = \infty$ in Regime 1.

We borrow some notation from \cite{DupuisSpiliopoulos}, where the large deviations principle for SDE~\eqref{Eq:LDPandA1} was established, in order to present our results.

\begin{definition}
\label{Def:ThreePossibleOperators} For each one of the Regimes $i=1,2$ defined in (\ref{Def:ThreePossibleRegimes}), and $x\in\mathbb{R}^{d}$, define the operators
\begin{align*}
\mathcal{L}_{x}^{1}  &  =b(x,\cdot)\cdot\nabla_{y} +\frac{1}{2}\textrm{tr}\left[\sigma
(x,\cdot)\sigma(x,\cdot)^{T}\nabla^{2}_{y}\right], \text{ and }\\
%\end{align*}
%and,
%\begin{align*}
\mathcal{L}_{x}^{2}  &  =\left[  \gamma b(x,\cdot)+c(x,\cdot)\right]
\cdot\nabla_{y}+\gamma\frac{1}{2}\textrm{tr}\left[\sigma(x,\cdot)\sigma(x,\cdot)^{T}\nabla^{2}_{y}\right].
\end{align*}
For each $x \in \R^d$, the domain of $\mathcal{L}_{x}^{i}$ is given by $\mathcal{D}(\mathcal{L}_{x}^{i})=\mathcal{C}%
^{2}(\mathcal{Y})$, for $i=1,2$.
\end{definition}

Note that the existence of a unique smooth invariant measure for the operator $\mathcal{L}_{x}^{i}$, $i=1,2$, is immediately implied by Condition \ref{A:Assumption1} (see Theorem 3.3.4 and Section 3.6.1 of \cite{BLP}). We impose the following condition for the invariant measure in Regime 1:
\begin{condition}
\label{A:Assumption2} For each $x \in \R^d$, let $\mu^{i}(dy|x)$ be the unique invariant measure corresponding to the operator $\mathcal{L}_{x}^{i}$ equipped with periodic boundary conditions in $y$.

Under Regime 1, we assume the standard centering condition (see \cite{BLP}) for the drift term $b$:
\[
\int_{\mathcal{Y}}b(x,y)\mu^{1}(dy|x)=0.
\]
The variable $x$ is being treated as a parameter here.
\end{condition}

We note that under Conditions~\ref{A:Assumption1} and~\ref{A:Assumption2}, for each $l\in\{1,\ldots,d\}$, there is a unique
twice differentiable function $\chi_{\ell}(x,y)$ that is $\rho-$ periodic in every direction in $y$, that solves
the following cell problem (for a proof see \cite{BLP}, Theorem 3.3.4):
\begin{equation}
\mathcal{L}_{x}^{1}\chi_{l}(x,y)=-b_{l}(x,y),\quad\int_{\mathcal{Y}}%
\chi_{l}(x,y)\mu^{1}(dy|x)=0, \quad l=1,...,d. \label{Eq:CellProblem}%
\end{equation}
We write $\chi=(\chi_{1},\ldots,\chi_{d})$. With this in hand, it will become useful to define a function $\lambda_{i}(x,y)$, $i=1,2$, as follows:
\begin{definition}
\label{Def:ThreePossibleFunctions} For each one of the Regimes $i=1,2$ defined in (\ref{Def:ThreePossibleRegimes}), let $\lambda_{i}:\mathbb{R}%
^{d}\times\mathcal{Y}\rightarrow\mathbb{R}^{d}$ be given by
\begin{align*}
\lambda_{1}(x,y)  &  =\left(  I+\nabla_y\chi(x,y)\right)
 c(x,y), \text{ and }\\
\lambda_{2} (x,y)  &  =\gamma b(x,y)+c(x,y),
\end{align*}
where $\chi=(\chi_{1},\ldots,\chi_{d})$ is defined by (\ref{Eq:CellProblem}) and $I$ is the identity matrix.

Moreover, let $\bar \lambda_i : \R^d \to \R^d$ be given by
\begin{equation*}
 \bar{\lambda}_{i}(x)=\int_{\mathcal{Y}}\lambda_{i}(x,y)\mu^{i}(dy|x),
\end{equation*}
and let $\bar X^i_s (x)$ be the flow generated by $\bar{\lambda}_{i}$. That is, for each $x \in \R^d$, $\bar X ^i_s (x)$ is the solution to the ordinary differential equation
\[
\bar{X}^{i}_{t} (x)=x+\int_{0}^{t}  \bar{\lambda}_{i}(\bar{X}^{i}_{s})ds.
\]
\end{definition}

We remark here that under Condition \ref{A:Assumption1}, the invariant measure  $\mu^{i}(dy|x)$ is certainly $C^{1}$ in the $x-$variable (see Section 3.6.1 in \cite{BLP}) and consequently $\bar{\lambda}_{i}(\cdot)$ is $C^{1}$.
Hence, the ODE for $\bar X^i$ is well defined and has unique solution in each regime. Moreover, Theorem 2.8 in \cite{DupuisSpiliopoulos} guarantees weak convergence of $X^{\epsilon}_{\cdot}$ to $\bar{X}^{i}_{\cdot}$ in $\mathcal{C}([0,T])$ for any $T>0$. Further, it is easy to observe that in our case Theorem 2.8 in \cite{DupuisSpiliopoulos} implies that for all  $\eta>0$ and $i=1,2$, we have
\begin{equation}
\lim_{\epsilon \to 0}\mathbb{P}\left\{ \sup_{0\leq t\leq T}\left|X^{\epsilon}_{t}-\bar{X}^{i}_{t} (x_0) \right|>\eta\right\}=0
, \quad T>0.\label{Eq:LLN}
\end{equation}

Our first objective is to understand the limit of the fluctuations process
\[
\eta^{\epsilon}_{t}=\frac{X^{\epsilon}_{t}-\bar{X}^{i}_{t}}{\beta^{\epsilon}},\quad\textrm{ as }\epsilon\downarrow 0,
\]
where $\beta^{\epsilon}$ is the appropriate normalization rate. Our second objective is to prove a limit theorem for an exit problem of $X^\eps$ using the limiting result for the fluctuations process. That is, for a smooth $C^2$-hypersurface $M$ in $\R^d$, we are interested in studying the joint distribution of the hitting time
\begin{equation*}
\tau ^{\epsilon }=\inf \left \{t\ge 0:X^{\epsilon}(t)\in M \right \},
\end{equation*}
and the exit location $X_\eps(\tau^\eps)\in M$ as $\eps\to 0$ under the assumption that $\tau_\eps<\infty$ with probability $1$. Precise assumptions on the joint geometry of the vector field $\bar{\lambda}$ and the surface $M$ will be given in Section \ref{S:Exit}.

We conclude this section with a remark for the degenerate case $\epsilon/\delta\rightarrow 0$.
\begin{remark}
In the case $\epsilon/\delta\rightarrow 0$, the results of \cite{DupuisSpiliopoulos} indicate that the correct pair $(\mathcal{L}_{x},\lambda(x,y))$ is that of Regime $2$ with $\gamma=0$, as long as there is a unique invariant measure to the corresponding first order operator. Due to the fact that this operator is first order, the existence and uniqueness of an invariant measure is a difficult issue and requires additional assumptions on the vector field $c(x,y)$. For this reason and for the additional technical difficulties in treating the related Poisson equation (\ref{Eq:CellProblemCLT}), we decided not to treat this case in the current paper. See however, Corollary \ref{C:CLT_Regime3} for the case $\gamma=0$, in dimension $d=1$ when $c(x,y)>0$.
\end{remark}

\section{Analysis of fluctuations}\label{S:CLT}

In this section we establish a limit theorem for the correction of $X^\eps - \bar X^i$ in each case. Before stating our results in this direction, we need additional notation.

Let us consider the auxiliary PDE problem
\begin{equation}
\mathcal{L}_{x}^{i}\Xi_{i}(x,y)=-\left(\lambda_{i}  \left( x,y\right) - \bar{\lambda}_{i}(x)\right),\quad\int_{\mathcal{Y}}%
\Xi_{i}(x,y)\mu^{i}(dy|x)=0,\hspace{0.1cm} \label{Eq:CellProblemCLT}
\end{equation}
for $i=1,2$. Since, by definition, the right hand side of the PDE averages to zero with respect to the corresponding invariant measure $\mu^{i}(dy|x)$, Fredholm alternative implies that the function $\Xi_{i}(x,y)$ is uniquely defined, $\rho-$periodic in $y$, twice differentiable in both variables and with bounded derivatives (see Theorem 3.3.4 in \cite{BLP}). The function
$\Xi_{i}$ will be used to understand the dependence of terms like
\begin{equation}
I^{\epsilon, i}_{t}=\int_{0}^{t}\left(\lambda_{i}\left(X^{\epsilon}_{s},\frac{X^{\epsilon}_{s}}{\delta}\right)-\bar{\lambda}_{i}(X^{\epsilon}_{s})\right)ds\label{Eq:FluctuationsTerm}
\end{equation}
on $\epsilon$ and $\delta$.

Let us give some preliminary notation. For a function $f : \R^d \times \mathcal{Y} \to \R^d$, denote $\bar f_i:\R^d \to \R^d$ the average with respect to $\mu^i$:
\[
\bar f_{i}(x) = \int_{ \mathcal{Y} } f (x,y) \mu^i (dy|x).
\]
Also, define the following functions,
\begin{align*}
\Psi_{1} (x,y)&=\left(I+\nabla_y \chi(x,y)\right)\Psi(x,y),\nonumber\\
J_{1} (x,y)&=c\nabla_{y} \Xi_{1}(x,y), \\
q_{1}(x,y)&=(I+\nabla_y \chi%
)(x,y)\sigma(x,y)\sigma^{T}(x,y)(I+\nabla_y \chi%
)^{T}(x,y), \nonumber
\end{align*}
and
\begin{align*}
\Psi_{2} (x,y)&=(I+\nabla_y \Xi_{2}(x,y)) \Psi(x,y),\nonumber\\
J_{2}( x,y)&=\left(b(	I+\nabla_{y}\Xi_{2})+\frac{1}{2}\textrm{tr}\left[\sigma \sigma^T \nabla_{y}\nabla_{y}\Xi_{2}\right]\right)(x,y), \\
q_{2}(x,y)&= (I+\nabla_y \Xi_{2})(x,y)\sigma(x,y)\sigma^{T}(x,y)(I+\nabla_y \Xi_{2})^{T}(x,y). \nonumber
\end{align*}
Further, for $x \in \R^d$, let $\Phi^i_{x}$  be the linearization of $\bar X^{i}$ along the orbit of $x$:
\begin{equation}
 \frac{d}{dt}\Phi^i_{x}(t)=D\bar{\lambda}^{i}( \bar X^i_t )\Phi^i_{x}(t), \text{  } \Phi^{i}_{x}(0)=x
\end{equation}
where $D\bar{\lambda}^{i}$ is the Jacobian matrix of $\bar{\lambda}^{i}$.
We are  now ready to state our results.

\begin{theorem} \label{T:CLT2}
%Suppose that $\eps^{ - \kappa } \delta \to 1$, for some $\kappa > 0$ as $\epsilon\downarrow 0$.
Let $T>0$, and assume Conditions \ref{A:Assumption1}-\ref{A:Assumption2}. Set $\theta_{1}^{\epsilon}=\frac{\delta}{\epsilon}$,
$\theta_{2}^{\epsilon}=\frac{\epsilon}{\delta}-\gamma$, $m=\min\left\{\frac{1}{2},\frac{\alpha_{1}}{2},\frac{\alpha_{2}}{2}\right\}$,
 $$ \ell_{i} = \lim_{ \eps \to 0 } \frac{\epsilon^{m}}{\theta^{\epsilon}_{i}} \in [0,\infty],$$
and
\begin{equation*}
\beta^{\epsilon}_{i} (\ell_{i} )=
\begin{cases}
\theta_{i}^{\epsilon} & ,\ell_{i}=0,\\% \text{ and } \gamma< \infty,
\epsilon^{m} & , \ell_{i} \in (0,\infty]% \text{ or } \gamma=\infty.%
\end{cases}.
\end{equation*}
Let $\bar{\eta}^i$ be a process of the Ornstein-Uhlenbeck type such that
\begin{eqnarray}
 d\bar{\eta}_{t}^i&=&D\bar{\lambda}_{i}(\bar{X}^{i}_{t}(x_{0}))\bar{\eta}_{t}dt+\left[ \ell^{-1}_{i} \one (\ell_{i} \in(0,\infty] ) + \one (\ell_{i} =0 ) \right ]\bar{J}_{i}(\bar{X}^{i}_{t}(x_{0}))dt+\nonumber\\
  & &\quad +\one \left(\ell_{i}\neq 0) \right)\left[\one \left(m=a_{1}/2\right)\bar{\Psi}_{i}^i(\bar{X}^{i}_{t}(x_{0}))dt+\one \left(m=1/2\right)\bar{q}^{1/2}_{i}(\bar{X}^{i}_{t}(x_{0}))dW_{t}\right]\nonumber\\
\bar{\eta}_{0}&=&\xi_{0}\one \left(m=a_{2}/2 \textrm{ and } \ell_{i}\neq 0\right).\nonumber
\end{eqnarray}
Then, for each $\eps > 0$, there is a process $\eta^\eps ( \ell_{i})$, such that
\[
X^{\epsilon}_t =  \bar X^{i}_{t} + \beta^\eps (\ell_{i}) \eta^{\eps}_t (\ell_{i})
\]
holds with probability $1$ for every $t>0$, and $\eta^\eps ( \ell_{i}) \to \bar{\eta}^i (\ell_{i})$, as $\eps \to 0$, in distribution in $\mathcal{C}\left([0,T];\mathbb{R}^{d}\right)$.
\end{theorem}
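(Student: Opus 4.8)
The plan is to decompose the displayed identity into a system of coupled equations for the fluctuation $\eta^\epsilon_t(\ell_i)$ and then pass to the limit term by term. First I would write, by It\^o's formula applied to $X^\epsilon$ and the defining ODE for $\bar X^i$,
\[
\eta^\epsilon_t(\ell_i) = \frac{X^\epsilon_0 - x_0}{\beta^\epsilon(\ell_i)} + \frac{1}{\beta^\epsilon(\ell_i)}\int_0^t\left(\lambda_i\Big(X^\epsilon_s,\tfrac{X^\epsilon_s}{\delta}\Big) - \bar\lambda_i(\bar X^i_s)\right)ds + \text{(oscillation corrections)} + \text{(martingale)}.
\]
The initial-condition term is $\epsilon^{a_2/2}\xi^\epsilon/\beta^\epsilon(\ell_i)$, which by the definition of $\beta^\epsilon$ and $m$ converges to $\xi_0\,\one(m=a_2/2,\ \ell_i\neq 0)$; this accounts for that summand. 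The drift integral splits as $\int_0^t\left(\lambda_i(X^\epsilon_s,X^\epsilon_s/\delta)-\bar\lambda_i(X^\epsilon_s)\right)ds + \int_0^t\left(\bar\lambda_i(X^\epsilon_s)-\bar\lambda_i(\bar X^i_s)\right)ds$. The second piece, divided by $\beta^\epsilon$, yields $\int_0^t D\bar\lambda_i(\bar X^i_s)\eta^\epsilon_s\,ds$ plus a negligible remainder (using the $C^1$ regularity of $\bar\lambda_i$ and the law-of-large-numbers estimate \eqref{Eq:LLN}), which gives the Ornstein--Uhlenbeck drift $D\bar\lambda_i(\bar X^i_t(x_0))\bar\eta_t\,dt$ in the limit.

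The core of the argument is the analysis of $I^{\epsilon,i}_t$ from \eqref{Eq:FluctuationsTerm}, for which the Poisson equation \eqref{Eq:CellProblemCLT} is the essential tool. Applying It\^o's formula to $\Xi_i(X^\epsilon_s, X^\epsilon_s/\delta)$ and using $\mathcal{L}^i_x\Xi_i = -(\lambda_i - \bar\lambda_i)$, I would rewrite $I^{\epsilon,i}_t$ (up to order $\theta^\epsilon_i=\delta/\epsilon$ in Regime 1, resp.\ $\theta^\epsilon_i = \epsilon/\delta-\gamma$ in Regime 2) as the sum of a boundary term $\delta\,[\Xi_i(X^\epsilon_0,\cdot)-\Xi_i(X^\epsilon_t,\cdot)]$ of order $\delta$, a new time integral involving the corrector applied to the remaining drift and diffusion coefficients (this produces $J_i$), a term involving $\Psi^\epsilon$ (producing $\Psi_i$), and a stochastic integral whose quadratic variation, after averaging, converges to $\int_0^t \bar q_i(\bar X^i_s)\,ds$; hence the Brownian term $\bar q_i^{1/2}(\bar X^i_t(x_0))\,dW_t$. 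Here one must be careful that in Regime $1$ the relevant corrector is $\chi$ (so $q_1$ uses $I+\nabla_y\chi$) while in Regime $2$ it is $\Xi_2$ directly; the stated formulas for $\Psi_i, J_i, q_i$ encode exactly this bookkeeping. The normalization $\beta^\epsilon(\ell_i)$ is then chosen so that the dominant of the three contributing scales --- $\theta^\epsilon_i$ (from $I^{\epsilon,i}$), $\epsilon^{a_1/2}$ (from $\Psi^\epsilon$), $\epsilon^{1/2}$ (from the noise) --- survives, and the indicator functions $\one(m=a_1/2)$, $\one(m=1/2)$, $\one(\ell_i=0)$ versus $\one(\ell_i\in(0,\infty])$ select precisely which terms persist and with what coefficient ($\ell_i^{-1}$ when $\theta^\epsilon_i$ is the bottleneck relative to $\epsilon^m$, $1$ when it dominates).

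Having obtained the approximate identity
\[
\eta^\epsilon_t(\ell_i) = (\text{initial term})_\epsilon + \int_0^t D\bar\lambda_i(\bar X^i_s)\eta^\epsilon_s\,ds + (\text{drift corrections})_\epsilon + (\text{martingale})_\epsilon + o(1),
\]
I would establish tightness of $\{\eta^\epsilon(\ell_i)\}$ in $\mathcal{C}([0,T];\R^d)$ --- using moment bounds on $X^\epsilon$, the bounded derivatives of all coefficients and correctors, Gronwall's inequality to control $\sup_{t\le T}|\eta^\epsilon_t|$, and the Aldous or Kolmogorov criterion --- and identify the limit by a martingale problem: the martingale part converges (by the martingale CLT, with bracket converging to $\int_0^t\bar q_i(\bar X^i_s)\,ds$) to $\bar q_i^{1/2}(\bar X^i)\,dW$, the drift terms converge by continuous mapping together with \eqref{Eq:LLN} and ergodic averaging of the fast variable, and uniqueness for the limiting linear SDE (it is an affine, hence well-posed, Ornstein--Uhlenbeck equation, solved explicitly via the fundamental matrix $\Phi^i_x$) pins down the limit as $\bar\eta^i(\ell_i)$. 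I expect the main obstacle to be the ergodic-averaging step: controlling the fast-oscillation integrals $\int_0^t g(X^\epsilon_s, X^\epsilon_s/\delta)\,ds$ when the slow variable $X^\epsilon_s$ itself moves --- this requires a quantitative homogenization estimate (again via a Poisson equation for the zero-mean part of $g$ and It\^o's formula, as in \cite{DupuisSpiliopoulos, BLP}) with error controlled uniformly in the regime, and a delicate treatment of the boundary between $\ell_i=0$ and $\ell_i\in(0,\infty]$ where $\theta^\epsilon_i$ and $\epsilon^m$ are comparable.
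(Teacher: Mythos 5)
Your proposal follows essentially the same route as the paper's proof: the same decomposition of $X^{\epsilon}_t-\bar X^{i}_t$ with Taylor linearization of $\bar\lambda_i$, the same use of It\^o's formula on the corrector $\Xi_i$ solving \eqref{Eq:CellProblemCLT} (together with $\chi$ in Regime 1) to extract the $J_i$, $\Psi_i$ and $\bar q_i$ contributions at the competing scales $\theta^{\epsilon}_i$, $\epsilon^{a_1/2}$, $\epsilon^{1/2}$, followed by tightness and identification of the limit through a martingale problem with the explicit Ornstein--Uhlenbeck limit. The differences are only routine technical choices (Gronwall plus Aldous/Kolmogorov instead of the paper's Duhamel-principle representation with Billingsley's criterion and a stopping-time argument for the Taylor remainder), so your sketch is correct and matches the paper's argument.
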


In the case $\gamma =0$ we can give an analogous result for the one dimensional case:

\begin{corollary} \label{C:CLT_Regime3}
Let $T>0$ and let the dimension be $d=1$.  Assume that Condition~\ref{A:Assumption1} holds, and that $c(x,y)>0$ for every $x \in \R, y \in \mathcal{Y}$. Then, in the case $i=2$, and $\gamma =0$, the conclusion of Theorem \ref{T:CLT2} holds by setting in the corresponding expressions $\gamma=0$.
\end{corollary}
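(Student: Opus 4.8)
The plan is to reduce Corollary~\ref{C:CLT_Regime3} to Theorem~\ref{T:CLT2} by verifying that the hypotheses used in that theorem for Regime~2 remain valid when $\gamma=0$, under the extra one-dimensional positivity assumption $c(x,y)>0$. The only place where $\gamma>0$ was genuinely needed is in establishing the existence, uniqueness and regularity of the invariant measure $\mu^2(dy|x)$ for the operator $\mathcal{L}^2_x$ and of the solution $\Xi_2(x,y)$ of the cell problem \eqref{Eq:CellProblemCLT}: for $\gamma>0$ the operator $\mathcal{L}^2_x$ is uniformly elliptic, so Condition~\ref{A:Assumption1} alone gives everything through Theorem~3.3.4 of \cite{BLP}. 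When $\gamma=0$ the operator degenerates to the first-order transport operator $\mathcal{L}^2_x = c(x,\cdot)\cdot\nabla_y$ on the torus $\mathcal{Y}=\mathbb{T}^1$, and one must argue separately. So the first step is: \emph{construct $\mu^2(dy|x)$ explicitly in $d=1$ for the first-order operator.}

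In one dimension with $c(x,y)>0$ this is classical. The stationary Fokker--Planck equation $\partial_y(c(x,y)m(y))=0$ on the circle of circumference $\rho$ forces $c(x,y)m(x,y)\equiv\text{const}(x)$, hence
\begin{equation*}
\mu^2(dy|x)=\frac{1}{Z(x)}\frac{dy}{c(x,y)},\qquad Z(x)=\int_0^\rho\frac{dy}{c(x,y)},
\end{equation*}
which is a well-defined smooth probability density because $c$ is bounded, smooth, positive, hence bounded away from zero on the compact torus for each $x$ (and, by Condition~\ref{A:Assumption1}, with derivatives controlled uniformly, so $Z(x)$ and $1/c$ are $C^1$ in $x$). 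Uniqueness of the invariant measure follows since any invariant density must solve the same first-order ODE. One then checks the solvability of the cell problem: for $f(x,\cdot)$ with $\int_{\mathcal Y} f\,\mu^2(dy|x)=0$, the equation $c(x,y)\partial_y g(x,y)=-f(x,y)$ has the periodic solution $g(x,y)=-\int_0^y f(x,z)/c(x,z)\,dz + \text{const}$, and the centering condition $\int f\,\mu^2=0$ is exactly what makes $g$ close up periodically. This gives $\Xi_2$, its $x$-derivatives, and the well-posedness of the functions $\Psi_2,J_2,q_2$ (note $J_2$ and $q_2$ simplify since the second-order terms carry the factor $\gamma=0$). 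The one nontrivial analytic point is making the $x$-dependence quantitatively controlled, but positivity and boundedness of $c$ together with the $C^2$-in-$x$, $C^1$-in-$y$ regularity from Condition~\ref{A:Assumption1} give uniform bounds on $\Xi_2$ and $\nabla_y\Xi_2$, which is all the proof of Theorem~\ref{T:CLT2} uses.

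The second step is to observe that, once these objects exist with the same regularity as in the $\gamma>0$ case, the entire proof of Theorem~\ref{T:CLT2} in Regime~2 goes through verbatim with $\gamma=0$ inserted everywhere. Concretely: the law of large numbers \eqref{Eq:LLN} still holds (the degenerate-$\gamma$ case of \cite{DupuisSpiliopoulos} covers it once the invariant measure exists, which is precisely the caveat flagged in the Remark preceding Section~\ref{S:CLT}); the decomposition of $I^{\epsilon,2}_t$ using It\^o's formula applied to $\delta\Xi_2(X^\epsilon_s, X^\epsilon_s/\delta)$ produces the same error terms with the same orders in $\epsilon,\delta$; and the tightness plus martingale-problem identification of the limit $\bar\eta^2$ is unchanged. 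The expansion that identifies $J_2,\Psi_2,q_2$ never divided by $\gamma$, so substituting $\gamma=0$ is harmless.

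The main obstacle, and the only real content, is therefore Step~1: establishing that the degenerate operator $\mathcal{L}^2_x$ with $\gamma=0$ admits a unique smooth invariant measure and a well-posed cell problem with the $x$-regularity required downstream. In dimension one this is elementary via the explicit formulas above; in higher dimensions it genuinely fails without extra structural hypotheses on $c$, which is exactly why the statement is restricted to $d=1$ and to $c>0$. I would write the proof as: (i) state and verify the explicit invariant measure and its $C^1$ dependence on $x$; (ii) verify solvability and regularity of \eqref{Eq:CellProblemCLT} with $i=2,\gamma=0$; (iii) remark that with these inputs the proof of Theorem~\ref{T:CLT2} applies without change, and point to the degenerate case of \cite{DupuisSpiliopoulos} for \eqref{Eq:LLN}.
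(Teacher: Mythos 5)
Your proposal is correct and follows essentially the same route as the paper: the paper's (omitted) proof rests exactly on the observation that, by periodicity and $c(x,y)>0$, the flow $\dot z_t = c(x,z_t)$ on the circle has a unique invariant measure, after which the Regime~2 argument of Theorem~\ref{T:CLT2} goes through with $\gamma=0$. Your explicit construction $\mu^2(dy|x)=Z(x)^{-1}c(x,y)^{-1}dy$ and the direct verification of the cell problem simply make that one-line justification precise.
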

The proof of this corollary is omitted since, due to periodicity and the condition $c(x,y)>0$, the ODE $\dot{z}_{t}=c(x,z_{t})$ has a unique invariant measure for any $x\in\mathbb{R}$, which then allows the proof of Regime $2$, presented below, to go through with $\gamma=0$.

Before proceeding with the proof of Theorem \ref{T:CLT2}, we mention a useful observation in the remark below.
\begin{remark}\label{R:AdditionalTerms}
Notice that the drift term in the effective equation for $\bar{\eta}$ has an extra term $\bar{J}_{i}(\bar{X}^{i}_{t})$, which is present in the case $\ell_{i}\neq \infty$. This term arises from the fluctuations associated with the term $I^{\epsilon,i}_{t}$ in (\ref{Eq:FluctuationsTerm}). Hence, if $\ell_{i}\neq\infty$, the contribution of this term is not negligible in the limit.
\end{remark}

\begin{proof}[Proof of Theorem~\ref{T:CLT2}]
For each one of the regimes, the proof goes in three steps. First, we deduce a convenient expression for the difference $\Delta^{\epsilon} (t) = X^{\epsilon}_{t} - \bar{X}^{i}_{t}(x_{0})$. Then, in the second step, we use the expression obtained in the last step to prove tightness of the process $\eta^{\epsilon}_{t}=\Delta^{\epsilon}(t)/\beta^{\epsilon}(\ell)$. Finally, in the third step, the limit point for the family of processes $\left( \eta^\eps \right)_{\eps>0}$ is obtained by formulating a martingale problem. We start with Regime 2, and then finalize the proof by proving the case in which the system is in Regime 1. For notational convenience we omit the subscripts $i$, and $x_0$, when no confusion arises, throughout the proof.

Let us first consider Regime $2$, i.e.,  $\gamma \in (0,\infty)$. Upon defining $\Delta^{\epsilon} (t) = X^{\epsilon}_{t} - \bar{X}^{i}_{t}(x_{0})$, we obtain that  $\Delta^{\epsilon} (t)$ is the solution to the equation

\begin{align} \notag
d\Delta^{\epsilon} (t)&=\left[  \frac{\epsilon}{\delta}b\left(  X_{t}^{\epsilon
},\frac{X_{t}^{\epsilon}}{\delta}\right)  +c\left(  X_{t}^{\epsilon}%
,\frac{X_{t}^{\epsilon}}{\delta}\right)  +\epsilon^{\alpha_{1}/2}\Psi^\eps \left(  X_{s}^{\epsilon},\frac{X_{s}^{\epsilon}}{\delta}\right)-\bar{\lambda}_{2}( \bar X _ t )\right]  dt \\
& \qquad +\sqrt{\epsilon}\sigma\left(  X_{t}^{\epsilon},\frac{X_{t}^{\epsilon}}{\delta}\right)dW_{t},\label{eqn: DeltaEqn}
\end{align}
with initial condition $\Delta^{\epsilon} (0)=\eps^{\alpha_2/2} \xi_\eps$. Let us rewrite this equation in terms of $\bar{\lambda}=\bar{\lambda}_{2}$. In order to do so, observe that, since $\bar{\lambda}$ is smooth, Taylor's theorem implies that
\[
\bar{\lambda}(x_1)=  \bar{\lambda}(x_{2}) +D_x \bar{\lambda}(x_{2}) (x_{1}-x_{2}) + Q[\bar \lambda](x_{1},x_{2}), \quad x_{1},x_{2} \in \R^d,
\]
for some function $Q[\bar \lambda]$ such that $|x_1 - x_2|^{-2}Q[\bar \lambda] (x_1,x_2)$ is locally bounded. Then, rewriting~\eqref{eqn: DeltaEqn},
\begin{align} \notag
\Delta^\epsilon(t) &= \eps^{\alpha_2/2}\xi^\eps + \theta^\eps_2 \int_{0}^{t}b\left(  X_{s}^{\epsilon},\frac{X_{s}^{\epsilon}}{\delta}\right)ds  + \int_{0}^{t} D_x \bar{\lambda} ( \bar X_s) \Delta^\eps (s) ds  \\ \notag
& \quad +\int_{0}^{t}\left[\lambda  \left(  X_{s}^{\epsilon},\frac{X_{s}^{\epsilon}}{\delta}\right) - \bar{\lambda}(X_{s}^{\epsilon})\right]  ds
+ \epsilon^{\alpha_{1}/2}\int_{0}^{t} \Psi^\eps \left(  X_{s}^{\epsilon},\frac{X_{s}^{\epsilon}}{\delta}\right)ds
\\
& \quad  + \sqrt{\epsilon}\int_{0}^{t}\sigma  \left(X_{s}^{\epsilon},\frac{X_{s}^{\epsilon}}{\delta}\right) dW_{s}
 + \int_0^t Q[\bar{\lambda}]\parbar{ \bar X_s,X^\eps_s}ds. \label{eqn: delta}
\end{align}
In order to understand the asymptotics of the right hand side of the last display, we need to understand the behavior of the integral term $$I^{\epsilon}_{t}=\int_{0}^{t}\left[\lambda  \left( X_{s}^{\epsilon},\frac{X_{s}^{\epsilon}}{\delta}\right) - \bar{\lambda}(X_{s}^{\epsilon})\right]  ds.$$ For this purpose, apply  It\^{o}'s formula to $\Xi(x,x/\delta)=(\Xi_{1}(x,x/\delta
),\ldots,\Xi_{d}(x,x/\delta))$ with $x=X_{t}^{\epsilon}$. After some algebra, it follows that
\begin{align*}
\delta d \Xi \parbar{ X^\eps _t, \frac{ X^\eps _t }{\delta}  } &= \left[ \mathcal{L}^2_{X^\eps_t} \Xi + \eps^{\alpha_1/2} \Psi^\eps \nabla_y \Xi + \theta^\eps_{2} J_2 \right] \parbar{ X^\eps _t, \frac{ X^\eps _t }{\delta}  } dt \\
& \quad + \sqrt{\eps} \left[ \parbar{ \nabla_y \Xi + \delta \nabla_x \Xi } \sigma \right] \parbar{ X^\eps _t, \frac{ X^\eps _t }{\delta} }dW_t +  R^\eps \parbar{ X^\eps _t, \frac{ X^\eps _t }{\delta}  }dt,
\end{align*}
where, for each $(x,y) \in \R^d \times \mathcal{Y}$, $R^\eps$ is given by
\begin{equation*} %\label{eqn: R_def}
R^\eps (x,y)= \left[ \eps b \nabla_x \Xi + \delta \parbar{ c  + \eps^{\alpha_1/2} \Psi^\eps  } \nabla_x \Xi + \eps \textrm{tr}\left[\sigma \sigma^T \parbar{ \nabla_x \nabla_y  + \frac{\delta}{2} \nabla_x \nabla_x  }\Xi \right] \right] (x,y).
\end{equation*}
Therefore, taking into account the PDE that $\Xi$ satisfies, we get that
\begin{align} \notag
I^\eps_{t} &=\int_0^t \parbar{\eps^{\alpha_1/2} \Psi^\eps \nabla_y \Xi + \theta^\eps_{2} J_2 }   \left( X_{s}^{\epsilon},\frac{X_{s}^{\epsilon}}{\delta}\right)  ds \\ \notag
& \quad  -\delta\left(\Xi \parbar{ X^\eps _t, \frac{ X^\eps _t }{\delta}  }-\Xi \parbar{ X^\eps _0, \frac{ X^\eps _0 }{\delta}  }\right)
 + \sqrt{\epsilon}\int_{0}^{t} \nabla_{y}\Xi\sigma\parbar{ X^\eps _s, \frac{ X^\eps _s }{\delta}  }dW_s  \\
 & \quad + \int_0^t R^\eps \parbar{ X^\eps _s, \frac{ X^\eps _s }{\delta}  }ds  + \delta\sqrt{\epsilon}\int_{0}^{t} \nabla_{x}\Xi\sigma\parbar{ X^\eps _s, \frac{ X^\eps _s }{\delta}  }dW_s \label{eqn: ieps}
\end{align}
Using the definition of $\Psi_2$ together with~\eqref{eqn: ieps} in~\eqref{eqn: delta} it follows that
\begin{align}
\Delta^\epsilon(t) &=  \eps^{\alpha_2/2}\xi^\eps + \int_{0}^{t} D_x \bar{\lambda} ( \bar X_s) \Delta^\eps (s) ds+\epsilon^{\alpha_{1}/2}\int_{0}^{t} \Psi^\eps_2 \left(  X_{s}^{\epsilon},\frac{X_{s}^{\epsilon}}{\delta}\right)ds\nonumber\\
& \quad + \theta^{\epsilon}_2\int_{0}^{t}J_2 \left( X_{s}^{\epsilon},\frac{X_{s}^{\epsilon}}{\delta}\right)  ds + \sqrt{\epsilon}\int_{0}^{t} \left(I+\nabla_{y}\Xi\right)\sigma\parbar{ X^\eps _s, \frac{ X^\eps _s }{\delta}  }dW_s\nonumber
\\
& \quad + \int_{0}^{t} R^\eps \parbar{ X^\eps _s, \frac{ X^\eps _s }{\delta}  } ds
-\delta\left(\Xi \parbar{ X^\eps _t, \frac{ X^\eps _t }{\delta}  }-\Xi \parbar{ X^\eps _0, \frac{ X^\eps _0 }{\delta}  }\right) \nonumber \\
& \quad +\delta\sqrt{\epsilon}\int_{0}^{t} \nabla_{x}\Xi\sigma\parbar{ X^\eps _s, \frac{ X^\eps _s }{\delta}  }dW_s
+ \int_0^t Q[\bar{\lambda}]\parbar{ \bar X_s,X^\eps_s}ds.\label{Eq:DeltaExpression}
\end{align}
We are now going to use this representation to prove that the family $\xbr{ \eta^\eps }_{ \eps > 0} = \left\{ \Delta^\eps / \beta^\eps (\ell), t \in [0,T] \right \}_{ \eps >0}$ is relatively compact in $C( [0,T]; \R^d)$. To do so we shall use Theorem 8.7 of~\cite{Billingsley1968}, which says that the family of processes $\xbr{ \eta^\eps }_{ \eps > 0}$ is relatively compact in $C( [0,T]; \R^d)$, if there is an $\eps_0$ such that for every $h>0$,
\begin{enumerate}
\item there is a $N_h < \infty$ so that
\[
\Pp \xbr{ \sup_{ t \in [0,T] } |\eta^\eps_t|  > N_h }<h, \quad \eps \in (0,\eps_0), \text{ and}
\]
\item for every $M<\infty$,
\[
\lim_{ r \to 0} \sup_{ \eps \in (0,\eps_0) } \Pp \xbr{ \sup_{ t_1,t_2 \in [0,T], |t_1 -t_2|<r }  |\eta^\eps_{t_1} - \eta^\eps_{t_2} |>h, \sup_{t \in [0,T] } |\eta^\eps_{t}|<M }=0.
\]
\end{enumerate}
We will prove these two points for the family $\eta^\eps = \Delta^\eps / \beta^\eps $.

Before we proceed to prove points (i) and (ii) above, we define some extra notation. Let
\begin{align} \notag
\Theta^{\epsilon}_{x_0} (t)&= \epsilon^{\frac{\alpha_{2}}{2}-m}\Phi_{x_{0}} (t)\xi^{\epsilon} + \epsilon^{\frac{\alpha_{1}}{2}-m}\Phi_{x_{0}} (t)
\int_0^t \left[\Phi_{x_{0}} (s)\right]^{-1}  \Psi^\eps_2 \left(  X_{s}^{\epsilon},\frac{X_{s}^{\epsilon}}{\delta}\right)ds \\
& \quad + \epsilon^{\frac{1}{2}-m}  \Phi_{x_{0}} (t) \int_{0}^{t} \left[\Phi_{x_{0}} (s)\right]^{-1} (I+\nabla_{y}\Xi)\sigma  \left(X_{s}^{\epsilon},\frac{X_{s}^{\epsilon}}{\delta}\right) dW_{s},\label{eqn: thetadef}
\end{align}
so that Duhamel's  principle implies that
\begin{eqnarray}
\Delta^\eps (t)&=&\eps^m \Theta^\eps_{x_0} (t)+ \theta^\eps_2 \Phi_{x_{0}} (t) \int_0^t \left[\Phi_{x_{0}} (s)\right]^{-1}  J_2\left( X_{s}^{\epsilon},\frac{X_{s}^{\epsilon}}{\delta}\right) ds \nonumber\\
& &\quad + R^{\epsilon}_{t}[\Phi]+\Phi_{x_{0}} (t)\int_0^t \left[\Phi_{x_{0}} (s)\right]^{-1}Q[\bar{\lambda}]\parbar{ \bar X_s,X^\eps_s}ds.\label{eqn: Duhamel}
\end{eqnarray}
where the term $R^{\epsilon}_{t}[\Phi]$ is defined as
\begin{align*}
R^{\epsilon}_{t}[\Phi]&=-\delta\Phi_{x_{0}} (t)\left(\Xi \parbar{ X^\eps _t, \frac{ X^\eps _t }{\delta}  }-\Xi \parbar{ X^\eps _0, \frac{ X^\eps _0 }{\delta}  }\right)\nonumber\\
&+\Phi_{x_{0}} (t)\int_{0}^{t}\left[\Phi_{x_{0}} (s)\right]^{-1}R^\eps \parbar{ X^\eps _s, \frac{ X^\eps _s }{\delta}  }ds\nonumber\\
&+\delta\sqrt{\epsilon}\Phi_{x_{0}} (t)\int_{0}^{t}\left[\Phi_{x_{0}} (s)\right]^{-1} \nabla_{x}\Xi\sigma\parbar{ X^\eps _s, \frac{ X^\eps _s }{\delta}  }dW_s.\nonumber
\end{align*}
We have now all the elements we need to show that points (i) and (ii) hold for the family $\Delta^\eps/ \beta^\eps (\ell)$, $\eps >0$. First, due to boundedness of the involved functions, the definition of $R^\eps$, and Doob's inequality for the martingale term, it is easy to see that
\[
\lim_{\epsilon\downarrow 0}\mathbb{E}\left[\sup_{0\leq t\leq T}\left(\left[\beta^{\epsilon}_{i}(\ell)\right]^{-1}R^{\epsilon}_{t}[\Phi]\right)^{2}\right]=0.
\]
Likewise, we will show that the family of processes
\[
\Lambda^\eps_t=\frac{1}{\beta^\eps (\ell) } \Phi_{x_{0}} (t) \int_0^t \left[\Phi_{x_{0}} (s)\right]^{-1}Q[\bar{\lambda}]\parbar{ \bar X_s,X^\eps_s}ds.
\]
also converges to $0$ uniformly on $[0,T]$ in probability as $\eps \to 0$. Once we have these two facts, points (i) and (ii) follow for $\Delta^\eps/\beta^\eps(\ell)$ due to the definition of $\beta^\eps (\ell)$,~\eqref{eqn: Duhamel}, and the boundedness of all functions. Hence, we are just left to prove that
\[
\sup_{t \leq T } |\Lambda^\eps_t | \stackrel{\Pp}{\longrightarrow} 0, \quad \eps \to 0.
\]
Let $\nu \in(1/2,1)$ and set
\[
\tau^\eps (\nu) = \inf \left \{t: \left | X^\eps_t - \bar X_t \right | > (\beta^\eps)^\nu    \right \}.
\]
Using the quadratic decay of $Q[\bar \lambda]$, and the fact that $2 \nu > 1 $, we see that $$\lim_{\eps \to 0} \sup_{t \leq T \wedge \tau^\eps(\nu) } |\Lambda^\eps_t |=0,$$ with probability $1$ . As a consequence, we are left to show that $\Pp \{ T < \tau^\eps ( \nu ) \} \to 1$. To do so, note that~\eqref{eqn: Duhamel} implies that if $\tau^\eps (\nu) < T$,
\begin{align*}
1 &= (\beta^\eps)^{ - \nu } \sup_{ t \leq T \wedge \tau^\eps ( \nu ) } | \Delta^\eps (t) | \\
& \leq (\beta^\eps)^{1-\nu} C_1  + C_2 (\beta^\eps)^{\nu},
\end{align*}
for some random variables $C_1, C_2 < \infty$, $\Pp$-a.s. Hence, since the r.h.s of the last display converges to $0$, it follows that $\lim_{\eps \to 0} \Pp \{ \tau^\eps (\nu) < T \}=0$, which implies the precompactness of the family $\{\eta^{\eps}=\Delta^\eps / \beta^\eps ( \ell),\eps>0\}$. Clearly, the tightness of the family $\{X^{\epsilon}_{\cdot}, \epsilon>0\}$ implies tightness of the pair $\left\{(\eta^{\epsilon}_{\cdot},X^{\epsilon}_{\cdot}),\epsilon>0 \right\}$.

Now that we know that the family $\left\{(\eta^{\epsilon}_{\cdot},X^{\epsilon}_{\cdot}),\epsilon>0 \right\}$ is precompact, we are left to identify its limit. We shall use (\ref{Eq:DeltaExpression}) and the martingale problem formulation.
%It is clear from the definition of $m$, that $\Theta^\eps_{x_0}(\cdot) \to \Theta_{x_0}^2(\cdot)$, weakly on the space of continuous functions as $\eps \to 0$. Then, using~\eqref{eqn: thetadef_0}, the definition of $\beta^\eps$ and the defintion of $\eta$ we see, after some computations, that~\eqref{eqn: Duhamel} can be rewritten as
%\begin{align}
%\Delta^\eps (t)&=\beta^\eps \eta^\eps (t) + R^{\epsilon}_{t}[\Phi] + \Phi_{x_{0}} (t)\int_0^t \left[\Phi_{x_{0}} (s)\right]^{-1}Q[{\lambda}]\parbar{ \bar X_s,X^\eps_s}ds, \label{eqn: DuhamelSimple1}
%\end{align}
%where $\eta^\eps \to \eta$ weakly in the space of continuous function as $\eps \to 0$. Hence, the result follows.
An inspection of (\ref{Eq:DeltaExpression}) shows that the terms in its third and fourth line
are of lower order compared to the other terms and thus should vanish in the limit. Let us make this now rigorous.

Consider, the process
\begin{equation*}
\zeta^{\epsilon}_{t}=\eta^{\epsilon}_{t}+\delta \left(\beta^{\epsilon}(\ell)\right)^{-1}\left(\Xi \parbar{ X^\eps _t, \frac{ X^\eps _t }{\delta}  }-\Xi \parbar{ X^\eps _0, \frac{ X^\eps _0 }{\delta}  }\right)
\end{equation*}
and let $\phi\in C^{2}_{b}(\mathbb{R}^{d})$. We identify the limit using the martingale problem approach. Write for simplicity $\beta=\beta^{\epsilon}(\ell)$. By It\^{o} formula we have
\begin{align} \notag
\phi(\zeta^\epsilon_{t}) &=  \phi(\beta^{-1}\eps^{\alpha_2/2}\xi^\eps) + \int_{0}^{t} D_x \bar{\lambda} ( \bar X_s) \eta^\eps (s) \nabla\phi(\zeta^{\eps}_{s}) ds+
\frac{\theta^{\epsilon}}{\beta}\int_{0}^{t}J \left( X_{s}^{\epsilon},\frac{X_{s}^{\epsilon}}{\delta}\right)  \nabla\phi(\zeta^{\epsilon}_{s}) ds\\
&\hspace{0.3cm}+\frac{\epsilon}{\beta^{2}}\frac{1}{2}\int_{0}^{t}\textrm{tr}\left[ \nabla\nabla\phi(\zeta^{\epsilon}_{s}) \left(I+\nabla_{y}\Xi\right)\sigma\sigma^{T}\left(I+\nabla_{y}\Xi\right)^{T} \right]\parbar{ X^\eps _s, \frac{ X^\eps _s }{\delta}  }ds\nonumber\\
&\hspace{0.3cm}+\frac{\sqrt{\epsilon}}{\beta}\int_{0}^{t} \nabla\phi(\zeta^{\epsilon}_{s}) \left(I+\nabla_{y}\Xi\sigma\right)\parbar{ X^\eps _s, \frac{ X^\eps _s }{\delta}  }dW_s\nonumber\\
&\hspace{0.3cm}+\int_{0}^{t}\left[\left(\frac{\epsilon^{\alpha_{1}/2}}{\beta} \Psi^\eps_{2} +\frac{1}{\beta}R^{\epsilon}\right)\parbar{ X^\eps _s, \frac{ X^\eps _s }{\delta}  }+\frac{1}{\beta}Q[\bar{\lambda}]\parbar{ \bar X_s,X^\eps_s}\right]\nabla\phi(\zeta^{\epsilon}_{s})ds+\nonumber\\
&\quad +\frac{\delta^{2}\epsilon}{\beta^{2}}\frac{1}{2}\int_{0}^{t}\textrm{tr}\left[ \nabla\nabla\phi(\zeta^{\epsilon}_{s})\nabla_{x}\Xi\sigma(\nabla_{x}\Xi\sigma)^{T}\parbar{ X^\eps _s, \frac{ X^\eps _s }{\delta}  }\right]ds\nonumber\\
&\quad+\frac{\delta\sqrt{\epsilon}}{\beta}\int_{0}^{t} \nabla\phi(\zeta^{\epsilon}_{s})\nabla_{x}\Xi\sigma\parbar{ X^\eps _s, \frac{ X^\eps _s }{\delta}  }dW_s \label{Eq:PhiExpression}
\end{align}

It is clear from (\ref{Eq:DeltaExpression}) that there are different cases to consider, depending on the order that the different terms go to zero. For the sake of presentation, we shall only study in detail the case $m=1/2$ and $\ell\neq 0$. In this case the limiting process is a solution to a stochastic differential equation. The other cases follow similarly. Without loss of generality, let us simplify the problem more and assume that $a_{1},a_{2}>1$. Then, we actually have $\beta=\beta^{\epsilon}(\ell)=\sqrt{\epsilon}$ and our target is to prove that for any $0\leq s\leq t\leq T$
\begin{eqnarray}
& &\lim_{\epsilon\downarrow 0}\mathbb{E}_{\eta^{\epsilon}_{0}}\left[\phi(\eta^{\epsilon}_{t})-\phi(\eta^{\epsilon}_{s})-\int_{s}^{t}\left[\left(D_x \bar{\lambda} ( \bar X_r) \eta^\eps (r)+\ell^{-1}\bar{J} \left( X_{r}^{\epsilon}\right)\right)\nabla\phi(\eta^{\eps}_{r})\right.\right.\nonumber\\
& &\hspace{5.3cm}\left.\left.+ \frac{1}{2}\textrm{tr}\left[ \nabla\nabla\phi(\eta^{\epsilon}_{r}) q\left( X_{r}^{\epsilon}\right) \right]\right]dr\Big | \mathcal{F}_{s}\right]=0\label{Eq:Martingale}
\end{eqnarray}
This follows directly upon rewriting the left hand side of (\ref{Eq:Martingale}) using (\ref{Eq:PhiExpression}). In particular, note that the following holds:
\begin{enumerate}
\item{Due to boundedness of $\Xi$
\begin{equation*}
\lim_{\epsilon\downarrow0}\delta\beta^{-1}\mathbb{E}\sup_{0\leq t\leq T}\left|\Xi\parbar{ X^\eps _t, \frac{ X^\eps _t }{\delta}  }\right|=0.
\end{equation*}
}
\item{Due to boundedness of the coefficients and the bounds on the derivatives of the auxiliary function $\Xi$ (see Theorem 3.3.4 in \cite{BLP}), we have that
\begin{equation*}
\lim_{\epsilon\downarrow0}\mathbb{E}\sup_{0\leq t\leq T}\int_{0}^{t}\left|\left[\left(\frac{\epsilon^{\alpha_{1}/2}}{\beta} \Psi^\eps_{2} +\frac{1}{\beta}R^{\epsilon}\right)\parbar{ X^\eps _s, \frac{ X^\eps _s }{\delta}  }\right]\nabla\phi(\zeta^{\epsilon}_{s})\right|ds=0.
\end{equation*}
Similarly to the argument used to prove tightness, we also have that
\[
\lim_{\eps\downarrow 0}\mathbb{E}\int_{s}^{t}(\beta^\eps)^{-1}Q[\bar {\lambda}]\parbar{ \bar X_r,X^\eps_r}\nabla\phi(\zeta^{\epsilon}_{r})dr= 0.
\]
Moreover, it is clear that there is also a constant $C<\infty$ such that for the remaining Riemann integrals  in (\ref{Eq:PhiExpression})
\begin{equation*}
\mathbb{E}\sup_{0\leq t\leq T}\int_{0}^{t}\left|\Gamma \parbar{ X^\eps _s, \frac{ X^\eps _s }{\delta}  }\right|ds\leq C.
\end{equation*}
}
\item{Due to boundedness of the coefficients and the bounds on the derivatives of the auxiliary function $\Xi$ \cite{BLP}, there is a constant $C<\infty$ such that, using Doob's martingale inequality, for any stochastic integral in (\ref{Eq:PhiExpression})
\begin{equation*}
\mathbb{E}\sup_{0\leq t\leq T}\left|\int_{0}^{t} \hat \Gamma\parbar{ X^\eps _s, \frac{ X^\eps _s }{\delta}  }dW_{s}\right|^{2}\leq C
\end{equation*}
}
\item{By assumption $\beta=\sqrt{\epsilon}$ and $\theta/\beta\rightarrow \ell^{-1}$ as $\epsilon\downarrow 0$.}
\end{enumerate}

The validity of (\ref{Eq:Martingale}) together with tightness of the pair $\left\{(\eta^{\epsilon}_{\cdot},X^{\epsilon}_{\cdot}),\epsilon>0 \right\}$ and uniqueness of the limiting martingale problem imply the claim.

Let us now concentrate on Regime 1; that is, $\gamma=\infty$. Consider the solution to the cell problem $\chi=(\chi_{1},\ldots,\chi_{d})$, which is periodic in every coordinate direction in $y$ and satisfies
\begin{equation}
\mathcal{L}_{x}^{1}\chi_{l}(x,y)=-b_{l}(x,y),\quad\int_{\mathcal{Y}}%
\chi_{l}(x,y)\mu(dy|x)=0,\hspace{0.1cm} l=1,...,d.\nonumber
\end{equation}
By applying  It\^{o}'s formula to $\chi(x,x/\delta)=(\chi_{1}(x,x/\delta
),\ldots,\chi_{d}(x,x/\delta))$ with $x=X_{t}^{\epsilon}$, we can reduce the problem to the previous case. Indeed, by It\^{o}'s formula, the cell problem formulation, and Definition~\ref{Def:ThreePossibleFunctions} it follows that
\begin{align*}
d \chi \parbar{ X^\eps _t, \frac{ X^\eps _t }{\delta}  } &= \left[ \parbar{\frac{\eps}{\delta}b + c} \cdot \nabla_x + \frac{1}{\delta} c\cdot \nabla_y + \frac{\eps}{2} \textrm{tr}\left[\sigma \sigma^T  \nabla^{2}_x\right] + \right.\\
&\left.\quad +\frac{\eps}{ \delta} \textrm{tr}\left[\sigma \sigma^T \nabla_x \nabla_y\right]  + \eps^{\alpha_1/2} \Psi^\eps \cdot \parbar{ \nabla_x + \frac{1}{\delta} \nabla_y }\right] \chi \parbar{ X^\eps _t, \frac{ X^\eps _t }{\delta} }dt \\
& \quad + \frac{\eps }{\delta^2} \mathcal{L}_{X^\eps_t}^{1} \chi \parbar{ X^\eps _t, \frac{ X^\eps _t }{\delta} } dt
+ \sqrt{\eps} \left[\parbar{ \nabla_x + \frac{1}{\delta} \nabla_y }\sigma \right] \chi \parbar{ X^\eps _t, \frac{ X^\eps _t }{\delta} }dW_t
\end{align*}
Hence, recalling the cell problem (\ref{Eq:CellProblem}) we have
\begin{align*}
\frac{\eps }{\delta} b \parbar{ X^\eps _t, \frac{ X^\eps _t }{\delta} } dt& =\left [ \parbar{\eps b + \delta c} \cdot \nabla_x +  c\cdot \nabla_y + \frac{\eps}{2}\delta \textrm{tr}\left[\sigma \sigma^T  \nabla^{2}_x \right] +\right.\\
 &\left. \qquad +\eps \textrm{tr}\left[\sigma \sigma^T \nabla_x \nabla_y\right]  + \delta\eps^{\alpha_1/2} \Psi^\eps \cdot \parbar{ \nabla_x + \frac{1}{\delta} \nabla_y }\right] \chi \parbar{ X^\eps _t, \frac{ X^\eps _t }{\delta} }dt \\
& \quad
+ \sqrt{\eps}\delta \left[  \parbar{ \nabla_x + \frac{1}{\delta} \nabla_y }\sigma \right] \chi \parbar{ X^\eps _t, \frac{ X^\eps _t }{\delta} }dW_t-d \chi \parbar{ X^\eps _t, \frac{ X^\eps _t }{\delta}  }.
\end{align*}
Using this in~\eqref{eqn: DeltaEqn}, rearranging terms, and proceeding as in Regime 2, we obtain (with $\lambda=\lambda_{1},\bar{\lambda}=\bar{\lambda}_{1}$)
\begin{align*}
d \Delta^{\epsilon} (t)  &= \left[D_x \bar{\lambda} ( \bar X_t) \Delta^\eps (t)
+ \parbar{ \lambda \parbar{ X^\eps _t, \frac{ X^\eps _t }{\delta} }  - \bar{\lambda} \left( X^\eps _t\right)   }
+ \epsilon^{\alpha_{1}/2} (I+\nabla_{y}\chi) \Psi^{\epsilon}_1 \parbar{ X^\eps _t, \frac{ X^\eps _t }{\delta} }  \right]dt \\
& \quad +\sqrt{\epsilon} (I+\nabla_{y}\chi)\sigma\parbar{ X^\eps _t, \frac{ X^\eps _t }{\delta} } dW_{t} +\mathcal{G}^\eps \chi \parbar{ X^\eps _t, \frac{ X^\eps _t }{\delta} }dt\\
&\quad + R_{1}^{\epsilon}(X^\eps_t, \frac{X^\eps_t}{\delta} ) dW_t
+Q[\bar{\lambda}]\parbar{ \bar X_t,X^\eps_t}dt- \delta d \chi \parbar{ X^\eps _t, \frac{ X^\eps _t }{\delta}  } ,
\end{align*}
with $\Delta^\eps(0) =\eps^{ \alpha_2/2} \xi^\eps$. Here, we have defined
\begin{align*}
\mathcal{G}^\eps\chi(x,y) &= \left[ \parbar{\eps b+ \delta c }\cdot \nabla_x \chi +  \eps \textrm{tr} \left[ \sigma \sigma^T \parbar{ \frac{\delta }{2} \nabla_x \nabla_x  + \nabla_x \nabla_y } \chi \right]+ \delta\eps^{\alpha_1/2} \Psi^\eps(x,y)  \nabla_x \chi \right] \parbar{x,y }\\
R_{1}^{\epsilon}(x,y)&= \sqrt{\epsilon}\delta  \nabla_x \chi (x,y)\sigma .
\end{align*}

As in Regime $2$, we need to understand the behavior of the term
$$I^{\epsilon}_{t}=\int_{0}^{t}\left[\lambda  \left( X_{s}^{\epsilon},\frac{X_{s}^{\epsilon}}{\delta}\right) - \bar{\lambda}(X_{s}^{\epsilon})\right]  ds.$$ For this purpose,
apply  It\^{o}'s formula to $\Xi(x,x/\delta)=(\Xi_{1}(x,x/\delta
),\ldots,\Xi_{d}(x,x/\delta))$ with $x=X_{t}^{\epsilon}$. Taking into account the PDE that $\Xi$ satisfies, we get
\begin{align*}
\int_{0}^{t}\left[\lambda \left( X_{s}^{\epsilon},\frac{X_{s}^{\epsilon}}{\delta}\right) -\bar{\lambda}(X_{s}^{\epsilon})\right]  ds &=
\frac{\delta}{\epsilon}\int_{0}^{t}c\nabla_{y}\Xi\left( X_{s}^{\epsilon},\frac{X_{s}^{\epsilon}}{\delta}\right)ds +R^{\epsilon}_{2}(t) \nonumber\\
&\quad -\frac{\delta^{2}}{\epsilon}\left(\Xi \parbar{ X^\eps _t, \frac{ X^\eps _t }{\delta}  }-\Xi \parbar{ X^\eps _0, \frac{ X^\eps _0 }{\delta}  }\right)
\end{align*}
where the lower order term $R^{\epsilon}_{2}(t)$ is
\begin{align*}
R^{\epsilon}_{2}(t)&=
\frac{\delta^{2}}{\epsilon}\int_{0}^{t}\left(c+\epsilon^{\alpha_{1}/2}\Psi^{\epsilon}\right)\nabla_{x}\Xi\parbar{ X^\eps _s, \frac{ X^\eps _s }{\delta}  }ds+\frac{\delta}{\epsilon}\epsilon^{\alpha_{1}/2}\int_{0}^{t}\Psi^{\epsilon}\nabla_{y}\Xi\parbar{ X^\eps _s, \frac{ X^\eps _s }{\delta}  }ds\nonumber\\
&\quad+\delta\int_{0}^{t}\left(b\nabla_{x}\Xi+\textrm{tr}\left[\sigma \sigma^T \nabla_{x}\nabla_{y}\Xi\right]\right)\parbar{ X^\eps _s, \frac{ X^\eps _s }{\delta}  }ds+
\delta^{2}\frac{1}{2}\int_{0}^{t}\textrm{tr}\left[\sigma \sigma^T \nabla_{x}\nabla_{x}\Xi\right]\parbar{ X^\eps _s, \frac{ X^\eps _s }{\delta}  }ds\nonumber\\
&\quad+\frac{\delta^{2}}{\epsilon}\sqrt{\epsilon}\int_{0}^{t} \nabla_{x}\Xi\sigma\parbar{ X^\eps _s, \frac{ X^\eps _s }{\delta}  }dW_s
+\frac{\delta}{\epsilon}\sqrt{\epsilon}\int_{0}^{t} \nabla_{y}\Xi\sigma\parbar{ X^\eps _s, \frac{ X^\eps _s }{\delta}  }dW_s\nonumber
\end{align*}

Hence, we get that
\begin{align*}
 \Delta^{\epsilon} (t)  &= \eps^{\alpha_{2}/2}\xi^{\epsilon}+\int_{0}^{t}D_x \bar{\lambda} ( \bar X_s) \Delta^\eps (s)ds
+ \frac{\delta}{\epsilon}\int_{0}^{t}c\nabla_{y}\Xi\left( X_{s}^{\epsilon},\frac{X_{s}^{\epsilon}}{\delta}\right)ds\\
& \quad + \epsilon^{\alpha_{1}/2} \int_{0}^{t} (I+\nabla_{y}\chi)\Psi^{\epsilon}\parbar{ X^\eps _s, \frac{ X^\eps _s }{\delta} }  ds
 +\sqrt{\epsilon} \int_{0}^{t}(I+\nabla_{y}\chi)\sigma\parbar{ X^\eps _s, \frac{ X^\eps _s }{\delta} } dW_{s}\\
  & \quad +\int_{0}^{t}\mathcal{G}^\eps \chi \parbar{ X^\eps _s, \frac{ X^\eps _s }{\delta} }ds + \int_{0}^{t}R_{1}^{\epsilon}(x^\eps_s, \frac{x^\eps_s}{\delta}) dW_s+R^{\epsilon}_{2}(t)
  +\int_{0}^{t}Q[\bar{\lambda}]\parbar{ \bar X_s,X^\eps_s}ds\\
&\quad
- \delta \left[\chi \parbar{ X^\eps _t, \frac{ X^\eps _t }{\delta}  }- \chi \parbar{ X_0, \frac{ X_0 }{\delta}  }\right]
-\frac{\delta^{2}}{\epsilon}\left(\Xi \parbar{ X^\eps _t, \frac{ X^\eps _t }{\delta}  }-\Xi \parbar{ X^\eps _0, \frac{ X^\eps _0 }{\delta}  }\right),
\end{align*}
Using these expressions, tightness follows after using Duhamel principle using the same arguments as for Regime $2$. The identification and uniqueness of the limiting point follows by the martingale problem by considering the process
\begin{equation*}
\zeta^{\epsilon}_{t}=\eta^{\epsilon}_{t}+\frac{\delta^{2}}{\epsilon}\beta^{-1} \left(\Xi \parbar{ X^\eps _t, \frac{ X^\eps _t }{\delta}  }-\Xi \parbar{ X^\eps _0, \frac{ X^\eps _0 }{\delta}  }\right)+\delta\beta^{-1} \left(\chi \parbar{ X^\eps _t, \frac{ X^\eps _t }{\delta}  }-\chi \parbar{ X^\eps _0, \frac{ X^\eps _0 }{\delta}  }\right)
\end{equation*}
and we applying It\^{o} formula to a test function $\phi\in C^{2}_{b}(\mathbb{R}^{d})$ with stochastic process $\zeta^{\epsilon}_{t}$. Using the resulting expression, the claim follows by the martingale problem formulation as in Regime $2$. Thus, the details are omitted.
\end{proof}

\section{On Asymptotics for the Exit Time and Exit Location}\label{S:Exit}

Let us state the main result in regards with the correction to the exit. First we describe our assumptions on the joint geometry of the vector field $\bar{\lambda}$ and the surface $M$.
We define
\[
T^i=\inf \left \{t>0: \bar X^i_t (x_0) \in M \right \},
\]
and assume that $0<T^i<\infty$, for each $i=1,2$. Also, for each $i=1,2$, we denote $z^i = \bar X^i _{ T^i } (x_0) \in M$ and assume that $\bar \lambda_i (z)$ does not belong to the tangent hyperplane $T_{z^i} M$ of $M$ at $z^i$. In other words, we assume that the positive orbit of $x_0$ under the vector field $\bar \lambda_i$ crosses $M$ transversally.

 Let us introduce a local basis around the exit point $z^i$ in order to express the correction. Given $z\in \R^d$, let $T_z M$ be the tangent space of $M$ at $z$. For $i=1,2$, define the projections $\pi^i: \R^d \to \R$ and $\pi_M^i :\R^d \to T_z M$ by
\[
 v=\pi^i v \cdot \bar\lambda_i(z)+ \pi_M^i v, \quad v \in \R^d.
\]
That is, $\pi^i$ is the (algebraic) projection onto $\Span(\bar \lambda_i (z^i ))$ along $T^i M$ and $\pi_M^i$ is the (geometric) projection onto $T_{z^i} M$ along $\Span( \bar \lambda_i(z^i))$.

We have now all the elements necessary to state the main theorem:

\begin{theorem} \label{thm: Main} With the notation of Theorem \ref{T:CLT2}, let  $\ell_{i} < \infty$ and assume additionally that $\eps^{-\zeta} \theta^{\epsilon}_{i} \to 1$,
for some $\zeta >0$, as $\epsilon,\delta\downarrow 0$.
Then, in the setting introduced above and under these assumptions, for $i=1,2$,
\begin{equation}
 \frac{1}{\beta^\eps(\ell)} (\tau^\eps-T^i, X^\eps_{\tau^\eps}-z^i){\to} \parbar{-\pi^i \bar{\eta}^i_T (\ell) , \pi^i_M \bar{\eta}^i_T(\ell) }. \label{eq:main_convergence_statement}
\end{equation}
in distribution as $\eps \to 0$.
\end{theorem}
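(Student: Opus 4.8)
The plan is to transfer the functional central limit theorem of Theorem~\ref{T:CLT2} through the exit map, using the transversality hypothesis to make the linearization argument rigorous. Fix $i \in \{1,2\}$ and write $\beta = \beta^\eps(\ell_i)$, $T = T^i$, $z = z^i$, $\bar\lambda = \bar\lambda_i$. From Theorem~\ref{T:CLT2} we have the decomposition $X^\eps_t = \bar X^i_t + \beta \eta^\eps_t$ with $\eta^\eps \to \bar\eta^i$ in distribution in $\mathcal{C}([0,T'];\R^d)$ for any $T' > T$, and by Skorokhod's representation theorem we may assume this convergence is almost sure along a common probability space. The extra hypothesis $\eps^{-\zeta}\theta^\eps_i \to 1$ guarantees $\beta \to 0$ at a polynomial rate, which will be needed to control the time-rescaling remainders below (it rules out pathologically slow convergence that would otherwise obstruct the comparison of $X^\eps$ near $M$ with its linear surrogate).

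The first step is geometric: near $z$, write $M$ locally as the zero set of a $C^2$ function $F$ with $\nabla F(z) \neq 0$, oriented so that $\bar X^i_t$ crosses from $\{F<0\}$ to $\{F>0\}$ at $t=T$; transversality says $\frac{d}{dt}F(\bar X^i_t)\big|_{t=T} = \nabla F(z)\cdot\bar\lambda(z) =: \kappa \neq 0$. Define $\sigma^\eps = \inf\{t: F(X^\eps_t) = 0\}$; since $X^\eps_t \to \bar X^i_t$ uniformly and the crossing is transversal, $\sigma^\eps \to T$ a.s. and $X^\eps_{\sigma^\eps} \to z$, and on the event $\{\sigma^\eps \text{ close to }T\}$ this $\sigma^\eps$ coincides with $\tau^\eps$ (the hypersurface is crossed cleanly, once, near $z$). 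The second step is to expand $F(X^\eps_{\tau^\eps}) = 0$ around $(T, \bar X^i_T)$: Taylor in time and space gives
\[
0 = F(\bar X^i_T) + \nabla F(z)\cdot\bar\lambda(z)\,(\tau^\eps - T) + \beta\,\nabla F(z)\cdot\eta^\eps_T + o\big(|\tau^\eps - T| + \beta\big),
\]
where $F(\bar X^i_T)=0$ and the error absorbs the second-order time term (controlled because $\tau^\eps - T = O(\beta)$, which follows a posteriori from the same equation and a bootstrap), the spatial remainder $Q[\cdot]$, and the oscillation of $\eta^\eps$ over the short time interval $[T,\tau^\eps]$ (controlled by tightness of $\eta^\eps$ in $\mathcal{C}$). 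Solving, $\tau^\eps - T = -\beta\,\kappa^{-1}\nabla F(z)\cdot\eta^\eps_T + o(\beta)$, i.e. $\beta^{-1}(\tau^\eps - T) \to -\kappa^{-1}\nabla F(z)\cdot\bar\eta^i_T$. One then checks that $-\kappa^{-1}\nabla F(z)\cdot v = -\pi^i v$ for all $v$: indeed $\pi^i$ is the projection onto $\Span(\bar\lambda(z))$ along $T_zM = \ker\nabla F(z)$, and $\pi^i v$ is precisely the coefficient $c$ with $v - c\,\bar\lambda(z) \in \ker\nabla F(z)$, which gives $c = \nabla F(z)\cdot v / (\nabla F(z)\cdot\bar\lambda(z)) = \kappa^{-1}\nabla F(z)\cdot v$.

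The third step handles the location: $X^\eps_{\tau^\eps} - z = (\bar X^i_{\tau^\eps} - \bar X^i_T) + \beta\eta^\eps_{\tau^\eps} = \bar\lambda(z)(\tau^\eps - T) + \beta\eta^\eps_T + o(\beta)$, using smoothness of $\bar X^i$ and continuity of $\eta^\eps$ near $T$. Dividing by $\beta$ and substituting the expression for $(\tau^\eps-T)/\beta$,
\[
\tfrac{1}{\beta}(X^\eps_{\tau^\eps} - z) \to -\pi^i\bar\eta^i_T\cdot\bar\lambda(z) + \bar\eta^i_T = \pi^i_M\bar\eta^i_T,
\]
by the very definition of the splitting $v = \pi^i v\cdot\bar\lambda(z) + \pi^i_M v$. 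Since both limits are read off the same almost-sure limit $\bar\eta^i_T$, the joint convergence~\eqref{eq:main_convergence_statement} follows, and transferring back from the Skorokhod space gives convergence in distribution. The main obstacle is the rigorous justification of the Taylor expansion step, specifically establishing a priori that $\tau^\eps - T = O(\beta)$ in probability so that the quadratic-in-time and $Q[\bar\lambda]$ remainders are genuinely $o(\beta)$; this is a bootstrap argument — first get $\tau^\eps - T = o(1)$ from the law of large numbers~\eqref{Eq:LLN}, feed it into the expansion to get $\tau^\eps - T = O(\beta)$, and only then conclude — and it is here that the polynomial-rate hypothesis $\eps^{-\zeta}\theta^\eps_i\to 1$ is used to dominate the various error scales uniformly. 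A secondary technical point is ensuring $\tau^\eps = \sigma^\eps$ with probability tending to one, i.e. that $X^\eps$ does not exit through some far-away part of $M$ before reaching a neighborhood of $z$; this again follows from~\eqref{Eq:LLN} together with the standing assumption $\tau^\eps < \infty$ a.s. and the transversal, finite-time nature of the limiting crossing.
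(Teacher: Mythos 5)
Your proposal is correct, and its core is the same as the paper's: localize the exit near $(T^i,z^i)$, replace the flow by $z^i\pm t\,\bar\lambda_i(z^i)$ and the surface by its tangent plane up to quadratic errors, and read both corrections off $\bar\eta^i_{T^i}$ through the splitting $v=\pi^i v\,\bar\lambda_i(z^i)+\pi^i_M v$ (your identification $\pi^i v=\kappa^{-1}\nabla F(z)\cdot v$ and the resulting signs agree with \eqref{eq:main_convergence_statement}). The execution differs in two places. First, the paper does not simply run the fluctuation theorem past $T^i$: it restates Theorem~\ref{T:CLT2} via Duhamel (Theorem~\ref{L:Lemma5_regimes2_3}), and in Lemma~\ref{lemma: X_before_after} uses the strong Markov property to restart the SDE at time $T^i$ with initial condition $z^i+\eps^\beta\eta^\eps_{T^i}$, re-applying the fluctuation theorem to the shifted process and tracking the new exponents $\tilde m,\tilde\beta$; you instead invoke Theorem~\ref{T:CLT2} on a fixed horizon $T'>T^i$ and control $\eta^\eps_{\tau^\eps}-\eta^\eps_{T^i}$ by $\mathcal{C}$-tightness, which is legitimate here since the theorem holds on any finite interval, and it spares you the restart bookkeeping. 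Second, where you use a level-set function $F$, Skorokhod representation, and a bootstrap, the paper parameterizes $M$ as a graph over $T_{z^i}M$ and imports Lemmas 6 and 8 of~\cite{SergioBakhtin2011} to work on the event $\{\tau^\eps=\tilde\tau^\eps\}\cap\{|\tau^\eps-T^i|\le\eps^\omega\}$ with $\omega\in(\beta/2,\beta)$, which is exactly what makes the quadratic terms $o(\eps^\beta)$. Your bootstrap must deliver the same quantitative statement: from the exit identity, absorb the $|\tau^\eps-T^i|^2$ term using $\tau^\eps-T^i=o_{\Pp}(1)$ to get $|\tau^\eps-T^i|=O_{\Pp}(\beta^\eps)$, after which the curvature-of-flow, curvature-of-$M$, and $Q[\bar\lambda]$ remainders are $O_{\Pp}((\beta^\eps)^2)=o_{\Pp}(\beta^\eps)$; stated this way the argument closes, and the ``no early exit through a distant part of $M$'' point is treated at the same (informal, deferred-to-\cite{SergioBakhtin2011}) level as in the paper.
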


\subsection{Proof of Theorem~\ref{thm: Main} }
The proof will be given in several steps. We use some of the ideas of the proof of Theorem 1 in [1], even though, due to the averaging effects, several new ingredients are needed.

Under the assumptions of the theorem, note that $\beta^\eps (  \ell) = \eps^\beta$, where
\[
\beta = \zeta \one \parbar{ \ell =0} + m \one \parbar{ \ell \ne 0}.
\]
It follows that in Regime $i=2$, $\beta=\min \left\{ \zeta,1/2, \alpha_1/2, \alpha_2/2\right\}$.

The following corollary of Theorem~\ref{T:CLT2} is essential in our proof. After the proof of this corollary, we will restate Theorem~\ref{T:CLT2} in a more convenient way for the propose of this proof. Let us start with the corollary:

\begin{corollary} \label{cor: ExitCorrection}
Under the same assumptions as in Theorem~\ref{T:CLT2}, if, as $\eps \to 0$, $t^\eps\to0$, $  t^\eps  (\eps/\delta - \gamma)\eps^{-m}  \to 0,$ and $m=\alpha_2/2$, then
\begin{equation} \label{eqn: ProbConv}
\sup_{ t\leq t^\eps } \left | \eps^{-m} \parbar{ X^{\epsilon}_{t} -  \bar X^i_t } - \xi^{0} \right | \PpConv 0 ,\quad \eps\to0.
\end{equation}
\end{corollary}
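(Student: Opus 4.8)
The plan is to derive \eqref{eqn: ProbConv} as a special case of the representation established in the proof of Theorem~\ref{T:CLT2}, exploiting the hypotheses $t^\eps\to 0$, $t^\eps\theta_2^\eps\eps^{-m}\to 0$ and $m=\alpha_2/2$. First I would record that under the stated hypotheses the normalization is $\beta^\eps(\ell)=\eps^m$ (since $m=\alpha_2/2$ forces $\ell\in(0,\infty]$ by the definition of $\ell_i$ unless we are in the boundary case, but in any event $\eps^m$ is the correct scale here, as the hypothesis $t^\eps\theta_2^\eps\eps^{-m}\to0$ is exactly what kills the $J$-term on the short interval $[0,t^\eps]$). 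Then I would start from the Duhamel representation~\eqref{eqn: Duhamel} (for Regime~2; the Regime~1 analogue derived at the end of the proof of Theorem~\ref{T:CLT2} is handled the same way), divide by $\eps^m$, and write
\[
\eps^{-m}\Delta^\eps(t) = \Theta^\eps_{x_0}(t) + \frac{\theta_2^\eps}{\eps^m}\Phi_{x_0}(t)\int_0^t[\Phi_{x_0}(s)]^{-1}J_2\Bigl(X^\eps_s,\tfrac{X^\eps_s}{\delta}\Bigr)ds + \eps^{-m}R^\eps_t[\Phi] + \eps^{-m}\Lambda^\eps_t \eps^m,
\]
and then estimate each term uniformly over $t\le t^\eps$.

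The key steps, in order: \textbf{(1)} On the short time interval, $\Phi_{x_0}(t)\to I$ uniformly as $t^\eps\to 0$ (it is the identity at $t=0$ and solves a linear ODE with bounded coefficients), so all the $\Phi$-conjugations may be replaced by the identity up to $o(1)$ errors. \textbf{(2)} The $J_2$-term is bounded in absolute value by $C\,(\theta_2^\eps/\eps^m)\,t^\eps$, which tends to $0$ precisely by the hypothesis $t^\eps\theta_2^\eps\eps^{-m}\to 0$. \textbf{(3)} The remainder $\eps^{-m}R^\eps_t[\Phi]$: the $\delta$-bracket term is $O(\delta\eps^{-m})$ (which one checks goes to zero using $\delta=o(\eps)$, or more carefully using the relation between $\delta$ and $\eps^m$ implied by the hypotheses — here the condition $\eps^{-\zeta}\theta_i^\eps\to1$ of Theorem~\ref{thm: Main} is the relevant control and should be invoked), the Riemann integral of $R^\eps$ is $O(t^\eps)$ times a bounded quantity, and the $\delta\sqrt\eps$ stochastic integral, after dividing by $\eps^m$, has quadratic variation $O(\delta^2\eps^{1-2m}t^\eps)$, which vanishes; apply Doob's inequality. \textbf{(4)} The $\Lambda^\eps$-term times $\eps^m/\eps^m$: exactly as in the tightness argument in the proof of Theorem~\ref{T:CLT2}, localize at $\tau^\eps(\nu)$ and use the quadratic decay of $Q[\bar\lambda]$ together with $\Pp\{t^\eps<\tau^\eps(\nu)\}\to1$; this gives $\sup_{t\le t^\eps}|\eps^{-m}\int_0^t Q[\bar\lambda](\bar X_s,X^\eps_s)ds|\PpConv0$ (and the short interval only helps). \textbf{(5)} Finally, $\Theta^\eps_{x_0}(t)$: since $m=\alpha_2/2$ and we may as well assume $\alpha_1/2,1/2\ge m$ relative to $t^\eps$ (the $\eps^{(\alpha_1/2)-m}$ prefactor on the $\Psi_2$ integral and the $\eps^{(1/2)-m}$ prefactor on the stochastic integral are each $\le 1$, and in fact the stochastic integral over $[0,t^\eps]$ has quadratic variation $O(t^\eps)\to0$, so its contribution vanishes in probability by Doob; similarly the $\Psi_2$ integral is $O(t^\eps)$), we are left with $\eps^{(\alpha_2/2)-m}\Phi_{x_0}(t)\xi^\eps = \Phi_{x_0}(t)\xi^\eps\to\xi^0$ uniformly on $[0,t^\eps]$ since $\Phi_{x_0}(t)\to I$ and $\xi^\eps\to\xi^0$ in distribution (upgrade to a coupling, or note that $|\Phi_{x_0}(t)-I|\sup_{t\le t^\eps}|\xi^\eps|\PpConv 0$ because $\xi^\eps$ is tight). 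Collecting \textbf{(1)}--\textbf{(5)} yields $\sup_{t\le t^\eps}|\eps^{-m}\Delta^\eps(t)-\xi^0|\PpConv 0$.

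The main obstacle I anticipate is \textbf{(3)}, bookkeeping the $\delta$-dependent remainder terms: although $\delta\to0$, several of them carry a compensating negative power of $\eps$ (the factors $\delta/\eps$, $\delta^2/\eps$ appearing in $R^\eps_2$ in the Regime~1 computation, or $\delta\eps^{-m}$ here), so one must use the quantitative hypothesis $\eps^{-\zeta}\theta^\eps_i\to1$ from Theorem~\ref{thm: Main} — equivalently a precise comparison of $\delta$, $\eps$ and $\eps^m$ — rather than merely $\delta=o(\eps)$, to be sure every such term is genuinely $o(1)$ uniformly on the shrinking interval. A secondary nuisance is that convergence of $\xi^\eps$ is only in distribution, so strictly speaking one should either pass to the Skorokhod representation on a common probability space or phrase the final conclusion accordingly; since the statement is convergence in probability of a difference, invoking tightness of $\{\xi^\eps\}$ together with $\Phi_{x_0}(t)\to I$ suffices and I would do it that way.
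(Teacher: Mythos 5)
Your proposal follows essentially the same route as the paper: divide the Duhamel representation~\eqref{eqn: Duhamel} by $\eps^{m}$, kill the $\theta^\eps$--$J$ term via the hypothesis $t^\eps\theta^\eps\eps^{-m}\to 0$, show $R^\eps_t[\Phi]$ and the $Q[\bar\lambda]$ term are lower order, and observe that on the shrinking interval $\Theta^\eps_{x_0}$ reduces to $\Phi_{x_0}(t)\xi^\eps\to\xi^0$, with Regime 1 handled analogously. One small remark: the condition $\eps^{-\zeta}\theta^\eps_i\to1$ from Theorem~\ref{thm: Main} is neither assumed in the corollary nor needed, since in both regimes $\delta=O(\eps)$ and $m\le 1/2$ already give $\delta\,\eps^{-m}\to 0$, which is the justification you correctly list first.
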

\begin{proof}
First, let us focus on the case $i=2$. In this case, observe that, since $m=\alpha/2$, from~\eqref{eqn: thetadef} we have
\[
\eps^m \Theta^\eps_{x_0} (t) = \eps^m \Phi_{x_0} (t) \xi^\eps + \eps^m r^\eps_{x_0} (t),
\]
where $r^\eps_{x_0}$ is a process such that for any family $(s^\eps)_{\eps >0}$ such that $s^\eps\to 0$, it follows that
\[
\sup_{ t\in [0,s^\eps] } |r^\eps_{x_0} (t)| \PpConv0,\quad \eps\to0.
\]
Hence, from ~\eqref{eqn: Duhamel} it follows that
\begin{align*}
\eps^{-m}\Delta^\eps (t)  &= \Phi_{x_0} (t) \xi^\eps + r^\eps_{x_0} (t) \nonumber\\
&+ \eps^{-m} \theta^\eps \Phi_{x_{0}} (t) \int_0^t \left[\Phi_{x_{0}} (s)\right]^{-1}  \left(b(I+\nabla_{y}\Xi)+\frac{1}{2}\textrm{tr}\left[\sigma \sigma^T \nabla_{y}\nabla_{y}\Xi\right]\right)  \left( X_{s}^{\epsilon},\frac{X_{s}^{\epsilon}}{\delta}\right) ds \nonumber\\
& \quad +\eps^{-m} R^{\epsilon}_{t}[\Phi]+ \eps^{-m}  \Phi_{x_{0}} (t)\int_0^{t} \left[\Phi_{x_{0}} (s)\right]^{-1}Q[\bar{\lambda}]\parbar{ \bar X_s,X^\eps_s}ds,
\end{align*}
which implies the result since $\theta^\eps t^\eps/ \eps^m \to0$, and, as in the proof of Theorem~\ref{T:CLT2}, $R^\eps[\Phi]$ is of order $\eps$, and $Q[\bar \lambda]$ of order $(\beta^\eps)^2$ as $\eps \to 0$. For Regime 1, the result follows analogously from the expression obtained for $\Delta^\eps$ in this case.
\end{proof}

We now give a useful restatement of Theorem~\ref{T:CLT2}. Before doing so, some additional notation is needed. For $i=1,2$, let
\begin{align}
\Theta^{i}_{x_0} (t) &= \ONE \parbar{m=\alpha_2/2} \Phi_{x_{0}}^i (t)\xi^{0} \notag  \\
& \quad   + \ONE \parbar{m=\alpha_1/2} \Phi_{x_{0}}^i (t)
\int_0^t \left[\Phi_{x_{0}}^i (s)\right]^{-1} \bar \Psi_{i}^i \left(  \bar X_{s}^{i} \right)ds \notag \\ %\notag
 &\quad + \ONE \parbar{m=1/2}  \Phi_{x_{0}}^i (t) \int_{0}^{t} \left[\Phi_{x_{0}}^i (s)\right]^{-1} \bar{q}^{1/2}_i( \bar X_s^{i} )  dW_{s} , \quad t \geq 0. \label{eqn: thetadef_0}
\end{align}
and
\begin{equation*}
H^{i}_{x_0} (t)= \Phi_{x_{0}}^i (t)
\int_0^t \left[\Phi_{x_{0}}^i (s)\right]^{-1} \bar J_{i} \left(  \bar X_{s}^{i} \right)ds.
\end{equation*}
We have the following result:
\begin{theorem} \label{L:Lemma5_regimes2_3}
%Suppose that $\eps^{ - \kappa } \delta \to 1$, for some $\kappa > 0$ as $\epsilon\downarrow 0$.
 Assume Regime $i=1,2$ and let Conditions \ref{A:Assumption1}-\ref{A:Assumption2} holding.
Consider the stochastic process
\begin{align}
\bar{\eta}_{t}^i (\ell_{i}) =  \Theta^i_{x_0} (t) \one \left(\ell_{i} \ne 0 \right)+  H^i_{x_0} (t)   \left[ \ell^{-1}_{i} \one (\ell_{i} \in(0,\infty] ) + \one (\ell_{i} =0 ) \right ].   \label{eqn: Eta23}
\end{align}

Then, for each $\eps > 0$, there is a process $\eta^\eps ( \ell_{i})$, such that
\[
X^{\epsilon}_t =  \bar X^{i}_{t} + \beta^\eps (\ell_{i}) \eta^{\eps}_t (\ell_{i})
\]
holds a.e. $t>0$ with probability $1$, and $\eta^\eps ( \ell_{i}) \to \bar{\eta}^i (\ell_{i})$, as $\eps \to 0$, in distribution in $\mathcal{C}\left([0,T];\mathbb{R}^{d}\right)$.
\end{theorem}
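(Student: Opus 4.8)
The plan is to obtain Theorem~\ref{L:Lemma5_regimes2_3} as a direct reformulation of Theorem~\ref{T:CLT2}: the limit process $\bar\eta^i$ there is, by construction, the solution of a \emph{linear} Ornstein--Uhlenbeck type SDE with additive adapted forcing, and the expression~\eqref{eqn: Eta23} is precisely its variation-of-constants (Duhamel) representation. Thus no new probabilistic input is needed beyond Theorem~\ref{T:CLT2}; the entire content is the explicit integration of that linear SDE, and the argument is identical in Regimes $i=1$ and $i=2$.

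Concretely, I would first rewrite the defining equation of $\bar\eta^i$ on $[0,T]$ in the form $d\bar\eta_t=D\bar\lambda_i(\bar X^i_t)\bar\eta_t\,dt+g_i(t)\,dt+h_i(t)\,dW_t$ with $\bar\eta_0=\xi^0\,\one(m=a_2/2)\,\one(\ell_i\ne0)$, where $g_i(t)$ bundles the term $[\ell_i^{-1}\one(\ell_i\in(0,\infty])+\one(\ell_i=0)]\,\bar J_i(\bar X^i_t)$ together with $\one(\ell_i\ne0)\one(m=a_1/2)\,\bar\Psi_i^i(\bar X^i_t)$, and $h_i(t)=\one(\ell_i\ne0)\one(m=1/2)\,\bar q_i^{1/2}(\bar X^i_t)$. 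By Condition~\ref{A:Assumption1} the Jacobian $D\bar\lambda_i$ and the averaged coefficients $\bar J_i,\bar\Psi_i^i,\bar q_i$ are bounded and continuous, and $\bar X^i$ is a continuous orbit on the compact interval $[0,T]$; hence $t\mapsto D\bar\lambda_i(\bar X^i_t)$, $g_i$ and $h_i$ are bounded and continuous, so the linear SDE has a unique strong solution (this is exactly the uniqueness of the limiting martingale problem invoked at the end of the proof of Theorem~\ref{T:CLT2}). Letting $\Phi^i_{x_0}$ be the fundamental matrix of $\dot\Phi=D\bar\lambda_i(\bar X^i_t)\Phi$ with $\Phi^i_{x_0}(0)=I$ --- invertible for every $t$ with continuous inverse on $[0,T]$, being the resolvent of a linear ODE --- It\^{o}'s formula applied to $[\Phi^i_{x_0}(t)]^{-1}\bar\eta_t$ yields
\[
\bar\eta^i_t=\Phi^i_{x_0}(t)\bar\eta_0+\Phi^i_{x_0}(t)\int_0^t[\Phi^i_{x_0}(s)]^{-1}g_i(s)\,ds+\Phi^i_{x_0}(t)\int_0^t[\Phi^i_{x_0}(s)]^{-1}h_i(s)\,dW_s .
\]
Substituting the explicit forms of $\bar\eta_0$, $g_i$, $h_i$ and regrouping, the three $\one(\ell_i\ne0)$-weighted pieces (initial condition, the $\bar\Psi_i^i$-drift, and the $\bar q_i^{1/2}$-diffusion) assemble into $\Theta^i_{x_0}(t)\one(\ell_i\ne0)$, while the $\bar J_i$-drift piece is exactly $[\ell_i^{-1}\one(\ell_i\in(0,\infty])+\one(\ell_i=0)]\,H^i_{x_0}(t)$; their sum is~\eqref{eqn: Eta23}.

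With this identification in hand, the two assertions of Theorem~\ref{L:Lemma5_regimes2_3} --- the pathwise decomposition $X^\eps_t=\bar X^i_t+\beta^\eps(\ell_i)\eta^\eps_t(\ell_i)$ and the convergence $\eta^\eps(\ell_i)\to\bar\eta^i(\ell_i)$ in distribution in $\mathcal{C}([0,T];\R^d)$ --- are literal transcriptions of Theorem~\ref{T:CLT2} with the limit written in the new form; the ``a.e.\ $t$'' claim here is weaker than the ``every $t$'' claim there and requires nothing extra. I do not expect a genuine obstacle: the only care required is the bookkeeping of the indicator functions --- the events $\{m=1/2\}$, $\{m=a_1/2\}$, $\{m=a_2/2\}$ need not be mutually exclusive, so several terms may be simultaneously present and must all be carried through --- and verifying that the variation-of-constants manipulation is licit, i.e.\ that $[\Phi^i_{x_0}]^{-1}g_i$ is integrable and $[\Phi^i_{x_0}]^{-1}h_i$ is a bona fide $L^2$-integrand, both immediate from continuity on $[0,T]$.
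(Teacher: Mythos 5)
Your proposal is correct and matches the paper's intended argument: the paper itself states that Theorem~\ref{L:Lemma5_regimes2_3} follows from Theorem~\ref{T:CLT2} together with Duhamel's principle (omitting the details), and your variation-of-constants integration of the limiting linear SDE, with the indicator bookkeeping, is exactly that argument carried out.
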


The proof of Theorem~\ref{L:Lemma5_regimes2_3} follows essentially by Theorem \ref{T:CLT2} and Duhamel's principle.  The details are omitted.

The following Lemma 4.4, allows us to rewrite $X^{\epsilon}_{T-t}$ and $X^{\epsilon}_{T+t}$  in appropriate forms.

\begin{lemma} \label{lemma: X_before_after}
Let $\omega \in (\beta/2,\beta)$. Then, for each $i=1,2,$  there are two a.s.-continuous stochastic processes $\Upsilon^{\eps,\pm,i}$ such that
\[
\sup_{t\in[0,\eps^\omega]} |\Upsilon^{\eps,\pm,i}_t| \stackrel{\Pp}{\longrightarrow} 0, \quad \eps \to 0,
\]
and a.e. $t\in[0,\eps^\omega]$ a.s
\begin{equation}
X^\eps_{ T^i-t} = z^i -  t\bar \lambda_i (z^i) +\eps^\beta \parbar { \eta^\eps_{T-t} ( \ell)  + \Upsilon^{\eps, -,i}_ t } \label{eqn: X_before}
\end{equation} and
\begin{equation}
\tilde X^\eps_{T^i+t}= z^i+t\bar \lambda_i (z^i)+\eps^\beta \parbar{ \tilde \eta^\eps_t (\ell) + \Upsilon^{\eps, +,i}_ t  }\label{eqn: X_after},
\end{equation}
where $\tilde \eta^\eps ( \ell)$ is a stochastic process such that for any $\omega \in (\beta/2,\beta),  \ell \in [0,\infty]$, it follows that
\begin{equation} \label{eqn: tildeetaconv}
 \tilde \eta^\eps _ { \cdot } ( \ell ) \rightarrow \bar{\eta}^i_\cdot  (\ell), \quad \eps\to0, \textrm{ in distribution in }\mathcal{C} ( [0,S];\mathbb{R}^{d} ) \textrm{ for any } S>0.
\end{equation}
\end{lemma}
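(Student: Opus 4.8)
\textbf{Proof proposal for Lemma~\ref{lemma: X_before_after}.}

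The plan is to work on the two time windows $[0,\eps^\omega]$ before and after $T^i$ separately, reducing each to an application of Theorem~\ref{L:Lemma5_regimes2_3} (the restatement of Theorem~\ref{T:CLT2}) together with the short-time control provided by Corollary~\ref{cor: ExitCorrection}. I will drop the superscript $i$. For the ``before'' estimate~\eqref{eqn: X_before}, start from the exact decomposition $X^\eps_{T-t}=\bar X_{T-t}+\eps^\beta \eta^\eps_{T-t}(\ell)$ guaranteed by Theorem~\ref{L:Lemma5_regimes2_3}, valid a.e. $t$ with probability one. Then Taylor-expand the deterministic flow around the exit point: since $\bar X_s$ is $C^2$ and $\dot{\bar X}_s=\bar\lambda(\bar X_s)$, we have $\bar X_{T-t}=z-t\bar\lambda(z)+\tfrac{t^2}{2}\,\rho(t)$ with $\rho$ locally bounded (this uses transversality only implicitly, via $\bar\lambda(z)\neq 0$, but really just smoothness of $\bar\lambda$ and the ODE). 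Absorbing the $O(t^2)$ remainder into $\Upsilon^{\eps,-}$ requires $t^2 = o(\eps^\beta)$ uniformly on $[0,\eps^\omega]$, i.e. $\eps^{2\omega}=o(\eps^\beta)$, which is exactly the hypothesis $\omega>\beta/2$. So set
\[
\Upsilon^{\eps,-}_t = \frac{t^2}{2\eps^\beta}\,\rho(t),
\]
and the sup over $t\in[0,\eps^\omega]$ is bounded by a constant times $\eps^{2\omega-\beta}\to0$ a.s., hence in probability.

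For the ``after'' estimate~\eqref{eqn: X_after}, $\tilde X^\eps$ denotes the process restarted at time $T^i$ from $X^\eps_{T^i}$ (or, more precisely, the strong Markov continuation of $X^\eps$ past $T^i$, which is what enters the exit-time analysis). The same Taylor expansion gives $\bar X_{T+t}=z+t\bar\lambda(z)+O(t^2)$, so the only new point is to produce the process $\tilde\eta^\eps$ satisfying the convergence~\eqref{eqn: tildeetaconv}. Here I would invoke Theorem~\ref{L:Lemma5_regimes2_3} (equivalently Theorem~\ref{T:CLT2}) applied to the shifted/restarted process: by the Markov property, $\tilde X^\eps_{T+\cdot}$ solves the same SDE~\eqref{Eq:LDPandA1} with a perturbed initial condition $X^\eps_{T^i}$, and Corollary~\ref{cor: ExitCorrection} (with $t^\eps\to 0$, applicable because $\eps^{-\zeta}\theta^\eps_i\to1$ forces $t^\eps(\eps/\delta-\gamma)\eps^{-m}\to 0$ on windows shrinking like $\eps^\omega$) identifies the rescaled initial fluctuation $\eps^{-m}(X^\eps_{T^i}-z)$ as converging in probability to $-\pi^i$-free data — in fact to $\bar\eta^i_{T}(\ell)$ up to the deterministic drift already extracted. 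Feeding this limiting initial condition into the CLT gives a limit process $\tilde\eta^\eps\to\bar\eta^i_\cdot(\ell)$ with the correct (Ornstein–Uhlenbeck-type) law; the $O(t^2)$ remainder is absorbed into $\Upsilon^{\eps,+}$ exactly as before, using $\omega>\beta/2$.

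The main obstacle I expect is the careful bookkeeping at the restart time $T^i$: $T^i$ is deterministic (it is the hitting time of $M$ by the \emph{deterministic} flow, not by $X^\eps$), so there is no stopping-time subtlety, but one must check that the fluctuation process $\eta^\eps$ and its limit $\bar\eta^i$ are genuinely the restrictions, to the two sub-intervals $[T^i-\eps^\omega,T^i]$ and $[T^i,T^i+\eps^\omega]$, of the \emph{same} limit object on a neighborhood of $T^i$ — i.e. that $\tilde\eta^\eps_0$ and $\eta^\eps_{T}$ have the same limit, so that the ``before'' and ``after'' pieces glue consistently at $t=0$. This is what makes the subsequent exit-time computation (projecting onto $\bar\lambda_i(z)$ via $\pi^i$ and onto $T_{z}M$ via $\pi^i_M$) legitimate. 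The remaining ingredients — uniform-in-$\eps$ tightness of $\Upsilon^{\eps,\pm}$, the $\mathcal{C}([0,S])$ convergence in~\eqref{eqn: tildeetaconv} for arbitrary fixed $S$ (which follows from Theorem~\ref{L:Lemma5_regimes2_3} since $T$ is finite and the CLT holds on any $[0,T']$), and the passage from a.e.-$t$ identities to a.s.-continuous versions — are routine given the machinery already assembled, so I would state them and refer back to the proof of Theorem~\ref{T:CLT2} rather than redo the estimates.
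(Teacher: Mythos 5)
Your proposal is correct and follows essentially the same route as the paper: decompose $X^\eps$ via Theorem~\ref{L:Lemma5_regimes2_3}, restart the process at the (deterministic) time $T^i$ and apply the same CLT to the shifted SDE whose initial perturbation is $\eps^\beta\eta^\eps_{T}(\ell)$, use Corollary~\ref{cor: ExitCorrection} for the short-window consistency, and absorb the quadratic flow remainder into $\Upsilon^{\eps,\pm,i}$ using $\omega>\beta/2$ (the paper gets the same $O(t|x-z^i|+t^2)$ bound from Lemma 6 of \cite{SergioBakhtin2011}). The one step you flag but leave unverified --- that the restarted CLT carries the \emph{same} normalization exponent, i.e. $\tilde m=\beta$ and hence $\tilde\beta=\beta$ in both the $\ell=0$ and $\ell\neq 0$ cases --- is precisely the short exponent bookkeeping the paper writes out, so nothing essential is missing.
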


\begin{proof}
 We will omit reference to the regime $i$ when no confusion arises. Let $\bar X_t(x)$ ( $\bar X_{-t} (x)$) be the positive (negative) orbit of $x_0$ under the flow $\bar \lambda$. Note, in particular, that
\[
\frac{d}{dt} \bar X_{ \pm t} ^i (x)= \pm \bar \lambda_i ( \bar X_{\pm t} ^i(x)  ), \quad \bar X_0^i (x) = x,
\]
for every $x\in \R^d$.

Due to Strong Markov property and Lemma~\ref{L:Lemma5_regimes2_3} the process $\tilde X^\eps_t = X^\eps_{ t + T}$ satisfies the SDE~\eqref{Eq:LDPandA1} with respect to the Brownian Motion $\tilde W(t)=W(t+T)-W(T)$ and initial condition $ \tilde X^\eps_ 0 = z+\eps^\beta \eta^\eps_T (\ell)$.
Hence, we can apply Lemma~\ref{L:Lemma5_regimes2_3} to this shifted equation, to obtain
 \[
\tilde X^\eps_t= \bar X_t^i (z^i)+\eps^{\tilde \beta} \tilde \eta^\eps_t (\ell),
 \]
where $\tilde\beta = \zeta$ if $i=2$, $ \tilde m =\min \{ \beta, 1/2, \alpha_1/2  \} > \zeta$, and $\tilde \beta= \tilde m$ in other case. The process $\tilde \eta^\eps$ is such that  $\tilde \eta^\eps (\ell) \to \tilde \tilde{\eta}^i (\ell)$, as $\eps\to 0$, in distribution in $\mathcal{C} ( [0,S];\mathbb{R}^{d} )$ for any $S>0$, where $\tilde \eta^i$ is defined by
\begin{align}
\tilde \eta_{t}^i(\ell) &=  \tilde \Theta_{z}^i(t) \one \left(\ell \ne 0 \right)
  + \left[ \ell^{-1} \one (\ell \in(0,\infty) ) + \one (\ell =0 ) \right ]  H_{z}^i (t)   \label{eqn: EtaNew},
\end{align}
where
\begin{align*}
\tilde \Theta_{z}^i (t) &=   \Phi_{z}^i (t) \eta_T^i (\ell)  \\
& \quad  + \ONE \parbar{\tilde m=\alpha_1/2} \Phi_{z}^i (t)
\int_0^t \left[\Phi_{z}^i (s)\right]^{-1} \bar \Psi \left(  \bar X_{s}^i(z) \right)ds \notag \\ %\notag
& \quad + \ONE \parbar{\tilde m=1/2}  \Phi_{z}^i (t) \int_{0}^{t} \left[\Phi_{z}^i (s)\right]^{-1} q^{1/2}_i( \bar X_s^i (z) )  dW_{s}, \label{eqn: thetadef_0}
\end{align*}
and $\ell = \lim_{ \eps \to 0 } \eps^{\tilde m } \parbar{  \eps / \delta-\gamma }^{-1}$. Note that $\ell = 0$ if and only if $m > \zeta$. In this case, $\beta=\zeta \leq m \leq \min \{ 1/2,\alpha_1/2 \}$. Hence, in any case, $\tilde m= \beta$, and, from the definition of $\tilde \beta$, $\tilde \beta =\beta$.

Using these results, it follows that
\begin{align*}
X^\eps_{T+t} = z^i+ t \bar\lambda (z^i) + \eps^\beta \parbar{ \tilde \eta^\eps_t (\ell) + \Upsilon^{\eps, +,i}_ t  },
\end{align*}
where $\Upsilon^{\eps, +,i}_ t = \eps^{ - \beta } \parbar {  \bar X_t^i (z^i) - \parbar{ z^i+ t \bar\lambda (z^i)} } $.  Hence to show that~\eqref{eqn: X_after} follows, we have to prove the convergence of $\Upsilon^{\eps, +,i}$ towards zero and~\eqref{eqn: tildeetaconv}. Let us start with the former,  to show that $\Upsilon^{\eps,+,i}$ converges towards $0$, use Lemma 6 in~\cite{SergioBakhtin2011} to find two positive constants $C_1$ and $C_2$ such that for any $t>0$, and $x \in \R^d$,
\begin{equation}
\sup_{s\leq t}\left|  \bar X_{\pm t}^i (x) -\left( z^i \pm t \bar \lambda(z) \right)\right| \leq C_1 e^{C_2 t }( t |x-z^i| + t^2).  \label{eqn: FlowClose}
\end{equation}
This implies the convergence to $0$ of $\Upsilon^{\eps, +,i}$. We are left to prove~\eqref{eqn: tildeetaconv}. In order to so, recall that $\tilde m = \beta$, and Corollary~\ref{cor: ExitCorrection} implies~\eqref{eqn: tildeetaconv}.

To get~\eqref{eqn: X_before}, note that, for $ t \in [0,T]$, Lemma~\ref{L:Lemma5_regimes2_3}, Equation~\eqref{eqn: FlowClose}, and the fact that $\bar X_{T-t}^i (x_0) = \bar X^i_{-t} (z^i)$ imply that
\begin{align*}
X^\eps_{ T-t} &= \bar X_{T-t}^i (x_0) + \eps^\beta  \eta^\eps_{T-t} ( \ell) \\
&= \bar X_{-t} ^i(z^i) + \eps^\beta  \eta^\eps_{T-t} ( \ell) \\
&= z^i -  t\bar \lambda_i (z^i) +\eps^\beta \parbar { \eta^\eps_{T-t} ( \ell)  + \Upsilon^{\eps, -,i}_ t },
\end{align*}
where $\Upsilon^{\eps, -,i}_ t = \eps^{ - \beta } \parbar {  \bar X_{-t}^i (z^i) - \parbar{ z^i - t \bar\lambda_i(z^i)} } $ converges to $0$ due to~\eqref{eqn: FlowClose}.
\end{proof}

Using Lemma \ref{lemma: X_before_after} we can write now an alternative characterization of the time $\tau^\eps$ that will be used to explicitly write out the correction $X^\eps _{\tau^\eps} - z^i$.

Parameterize the hypersurface $M$  as a graph of a $C^2$-function $F^i$ over $T_zM$, i.e., $y\mapsto z^i+y+F^i(y)\cdot \bar \lambda_i(z)$ gives a $C^2$-parametrization of a neighborhood of $z^i$ in $M$ by a neighborhood of $0$ in $T_zM$. Moreover, $DF(0)=0$ so that $|F(y)|=O(|y|^2)$, $y\to 0$. With this definition, it is clear that, for $w\in \R^d$ with $w-z$ small enough, $w\in M$ if and only if $\pi_b(w-z)=F(\pi_M(w-z))$. Moreover, Lemma~\ref{lemma: X_before_after} and a direct application of Lemma 8 from~\cite{SergioBakhtin2011} gives that for every $\omega\in (\beta/2, \beta)$,
\[
\lim_{ \eps \to 0} \Pp \parbar{ \bigl\{\tau^\eps = \tilde \tau^\eps \} \cap  \left \{  |\tau^\eps - T^i| \leq \eps^\omega \right \} } =1,
\]
where $ \tilde \tau^\eps=\inf \{  t\geq 0: \pi^i \left(X^\eps_t-z^i \right) = F\left( \pi_M^i \left(X^\eps_t-z^i \right)  \right)\}\bigr \}$. Hence, from now on, we fix $\omega\in (\beta/2,\beta)$ and condition on the intersection of the events  $\Omega^\eps_1=\bigl \{ \tilde \tau^\eps =\tau^\eps \bigr \}$ and $\Omega^\eps_2= \bigl \{  | \tau^\eps- T^i|\leq \eps^\omega \bigr \}$.

Let $\hat \tau^\eps = T^i- \tau^\eps$, so that $\tau^\eps = T^i- \hat \tau^\eps$, and (since we are conditioning in $\Omega^\eps_2$ ) $|\hat \tau^\eps | < \eps^\omega$. In case $\hat \tau^\eps >0$, the conditioning on $\Omega^\eps_1$, and~\eqref{eqn: X_before} imply that
\begin{align*}
-\hat \tau^\eps+\eps^\beta \pi^i \parbar{ \eta^\eps_{T^i-\hat \tau^\eps} (\ell) + \Upsilon^{ \eps, -,i }_{ \hat\tau^\eps }} = F \parbar { D^{\eps,+} },
\end{align*}
where $D^{\eps,+} = -\hat \tau^\eps \pi^i_M\bar\lambda_i (z) + \eps^\beta \pi^i_M\parbar{ \eta^\eps_{T^i-\hat \tau^\eps} (\ell) + \Upsilon^{ \eps, -,i}_{ \hat\tau^\eps } }$. Hence, since $2\omega > \beta$, it follows that
\begin{equation} \label{eqn:conv1}
\one\parbar{ \{ T^i> \tau_\eps\} \cap \Omega^\eps_1 \cap \Omega^\eps_2} \eps^{-\beta} \parbar{ ( \tau^\eps - T^i ) - \pi^i \eta^\eps_{ T^i - \hat\tau^\eps }  (\ell) }  \stackrel{\Pp}{\longrightarrow} 0,\quad\eps\to0.
\end{equation}
In the case $\hat \tau^\eps < 0$, the reasoning is completely analogous. Indeed, using~\eqref{eqn: X_after}, we can get that
\begin{equation} \label{eqn:conv2}
\one\parbar{ \{ T^i \leq \tau_\eps\} \cap \Omega^\eps_1 \cap \Omega^\eps_2} \eps^{-\beta} \parbar{ ( \tau^\eps - T^i ) - \pi^i \tilde \eta^\eps_{  - \hat\tau^\eps } (\ell)  }  \stackrel{\Pp}{\longrightarrow} 0,\quad\eps\to0.
\end{equation}
By  adding up~\eqref{eqn:conv1},and~\eqref{eqn:conv2}, and using~\eqref{eqn: tildeetaconv} in~\eqref{eqn:conv1} it follows that
\[
\eps^{-\beta} \parbar { \tau^\eps - T^i   } \to \pi^i \eta^i_T( \ell), \text{ in distribution, as }\eps\to0.
\]

These computations give us the convergence of the time component. Once we have the time component, the spatial component and the joint convergence follows as in [1]. This completes the proof of Theorem \ref{thm: Main}.

\section{First Order Langevin Equation in a  Rough Potential}\label{S:ConditionalExitLaw}

In this section we apply Theorems \ref{T:CLT2} and \ref{thm: Main} to a small noise diffusion process in a rough environment in Regime $1$, i.e. we  assume
 that $\delta$ goes to zero faster than $\epsilon$ does. We change the notation to include the dependence on $\delta$.

To be precise, we consider the first order Langevin equation in a rough potential, defined as
\begin{equation}
d\x _{t}=\left[  -\frac{\epsilon}{\delta} \nabla Q\left(
\frac{\x_{t}}{\delta}\right)  - \nabla V\left(  \x_{t}\right)  \right]  dt+\sqrt{2\eps D}dW_{t},\hspace{0.2cm}%
X^{\epsilon}_{0}=x^{\epsilon}_{0}. \label{Eq:LangevinEquation2}%
\end{equation}
where $x^{\epsilon}_{0}\rightarrow x_{0}$ as $\epsilon\downarrow 0$. Note that the rough potential $V^{\epsilon,\delta}\left(x,\frac{x}{\delta}\right)=\epsilon Q(x/\delta)+V(x)$ is composed by a large scale smooth part
, $V(x)$, and by a smaller scale fast oscillating part, $\epsilon Q(x/\delta)$. We assume that $V$ is a $C^3(\R^{d})$ function while $Q$ is $C^2(\R^{d})$ with period $\rho$.

By~\eqref{Eq:LLN} we know that $\x$ converges in probability, uniformly in $t\in[0,T]$, as $\epsilon/\delta\uparrow\infty$, to $\bar{X}$ where $\bar X$ is the solution to the ODE
\[
\dot{\bar{X}}_{t}=\bar{\lambda}\left(\bar{X}_{t}\right),\quad \bar X_0 = x_0,
\]
driven by the vector field $\bar{\lambda}$ defined in Definition~\ref{Def:ThreePossibleFunctions}. In this case is easy to see that the invariant measure
$\mu(dy)$ of $\mathcal{L}=-\nabla Q \partial_x + D \partial_x^2$ is given by the Gibbs distribution $\mu(dy)=K^{-1}e^{-\frac{Q(y)}{D} }dy$, where
\[
K=\int_{\mathcal{Y}}e^{-\frac{Q(y)}{D}}dy.
\]
In dimension $d=1$, after some algebra, we get that
  $$\bar \lambda (x)=-\frac{\rho^{2}}{K\hat{K}} V'(x),$$ where $\hat{K}=\int
_{\mathbb{T}}e^{\frac{Q(y)}{D}}dy.$

Let us first see how the central limit type of Theorem \ref{T:CLT2} translates in this special case of interest. In many problems of interest, one is
interested in understanding the behavior of the process starting within the neighborhood of a stable point of $V(x)$, assume that such a point is $x=x_{0}$. In Figure \ref{F:Figure1a}, we see a simple example of such a potential function.
To account for this fact
we assume that $x^{\epsilon}_{0}=x_{0}+\epsilon^{a_{2}/2}\xi^{\epsilon}$, where the random variable $\xi^{\epsilon}\rightarrow \xi^{0}$ in distribution as $\epsilon\downarrow 0$.

The following proposition states the central limit theorem in this particular case. We point out the presence of the additional drift term $\bar{J}_{1}(\bar{X}_{t})$.
\begin{proposition}
Consider the solution to the SDE (\ref{Eq:LangevinEquation2}) in $t\in[0,T]$ where $\epsilon/\delta\uparrow\infty$. Under the setup of Theorem \ref{T:CLT2} we have that the process $\eta^{\epsilon}_{t}(\ell)=\frac{X^{\epsilon}_{t}-\bar{X}_{t}}{\beta^{\epsilon}(\ell)}$ converges in distribution in the space of
continuous functions in $\mathcal{C}\left([0,T];\mathbb{R}^{d}\right)$ to the process $\bar{\eta}_{t}$ which is as follows.
\begin{enumerate}
 \item{If $\ell=0$, then $\bar{\eta}$ satisfies the ODE
 \[
  d\bar{\eta}_{t}=\left[D\bar{\lambda}(\bar{X}_{t})\bar{\eta}_{t}+\bar{J}_{1}(\bar{X}_{t})\right]dt, \quad \bar{\eta}_{0}=0
 \]
 .}
\item{If $\ell\in(0,\infty]$, then $\bar{\eta}$ is solution to the Ornstein-Uhlenbeck process
\[
d\bar{\eta}_{t}=\left[D\bar{\lambda}(\bar{X}_{t})\bar{\eta}_{t}+\ell^{-1}\bar{J}_{1}(\bar{X}_{t})\right]dt+\one\{m=1/2\}\bar{q}^{1/2}_1 dW_{t},\quad \bar{\eta}_{0}=\xi^{0}\one\{m=a_{2}/2\}.
\]
In dimension $d=1$, we have that $\bar \lambda (x)=-\frac{\rho^{2}}{K\hat{K}} V'(x)$ and $\bar{q}_1=-\frac{\rho^{2}2D}{K\hat{K}}$ and
\[
\bar{J}_{1}(x)=-\frac{\rho}{K\hat{K}D}|V'(x)|^{2}\int_{\mathbb{T}}\left[\left(1-\frac{\rho}{\hat{K}}e^{\frac{Q(y)}{D}}\right)\int_{\mathbb{T}\bigcap\{z\leq y\}}\left(1-\frac{\rho}{K}e^{-\frac{Q(z)}{D}}\right)dz\right]dy.
\]}
\end{enumerate}
\end{proposition}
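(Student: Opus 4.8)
The plan is to obtain the proposition as a direct specialization of Theorem~\ref{T:CLT2} to the Langevin SDE~\eqref{Eq:LangevinEquation2}, and then to carry out the explicit one-dimensional computations of the effective coefficients $\bar\lambda$, $\bar q_1$ and $\bar J_1$. First I would check that the Langevin equation fits the general framework: here $b(x,y) = -\nabla Q(y)$, $c(x,y) = -\nabla V(x)$, $\sigma(x,y) = \sqrt{2D}\,I$ (constant, hence trivially satisfying Condition~\ref{A:Assumption1}(iii)), and $\Psi^\eps \equiv 0$, so the $a_1$-term is absent. Since we are in Regime~1 ($\epsilon/\delta\uparrow\infty$), the relevant operator is $\mathcal{L}^1_x = -\nabla Q(y)\cdot\nabla_y + D\,\Delta_y$, whose unique invariant measure is the Gibbs measure $\mu^1(dy) = K^{-1}e^{-Q(y)/D}\,dy$ — this is a standard reversibility computation and one must note that the centering condition $\int_{\mathcal Y} \nabla Q(y)\,\mu^1(dy)=0$ of Condition~\ref{A:Assumption2} holds automatically because $\nabla Q\, e^{-Q/D} = -D\,\nabla(e^{-Q/D})$ integrates to zero over the torus. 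With these identifications, Theorem~\ref{T:CLT2} applies verbatim; since $\Psi \equiv 0$ the term $\one(m=\alpha_1/2)\bar\Psi_1$ drops, and one reads off the two cases $\ell=0$ (pure ODE, $\bar\eta_0 = 0$ since $m<\alpha_2/2$ forces the indicator to vanish — here I should be careful to state that $\ell=0$ together with the normalization $\beta^\eps = \theta^\eps$ precludes the initial-condition contribution) and $\ell\in(0,\infty]$ (the Ornstein--Uhlenbeck process with the stated drift and, when $m=1/2$, diffusion $\bar q_1^{1/2}$).

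The substantive content is the dimension-one evaluation of the constants, which I would do by solving the relevant scalar cell problems explicitly. In $d=1$ the cell problem~\eqref{Eq:CellProblem} reads $D\chi'' - Q'(y)\chi' = Q'(y)$ on the circle of circumference $\rho$, i.e. $(e^{-Q/D}\chi')' = D^{-1}Q' e^{-Q/D} = -(e^{-Q/D})'$, so $\chi'(y) = -1 + c_1 e^{Q(y)/D}$, and the periodicity/solvability constraint $\int_{\mathbb T}\chi' = 0$ pins down $c_1 = \rho/\hat K$ where $\hat K = \int_{\mathbb T} e^{Q/D}$. Hence $I + \nabla_y\chi = \chi' + 1 = (\rho/\hat K)e^{Q/D}$, and then $\bar\lambda(x) = \int_{\mathbb T}(1+\chi'(y))(-V'(x))\,\mu^1(dy) = -V'(x)\cdot\frac{\rho}{\hat K}\cdot\frac{1}{K}\int_{\mathbb T}e^{Q/D}e^{-Q/D}\,dy = -\frac{\rho^2}{K\hat K}V'(x)$, matching the stated formula. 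The factor $q_1(x,y) = (1+\chi')^2\cdot 2D$ gives $\bar q_1 = \int_{\mathbb T}(1+\chi')^2 2D\,\mu^1(dy) = \frac{2D\rho^2}{\hat K^2}\cdot\frac1K\int_{\mathbb T}e^{2Q/D}e^{-Q/D}\,dy$; here I would reconcile this with the stated $\bar q_1 = -\frac{2D\rho^2}{K\hat K}$ — note the stated value is negative, which signals a sign/convention issue one must track carefully, most plausibly that the author intends $\bar q_1$ to absorb a sign from how the quadratic variation is recorded, or that the formula should read with $\hat K$ replaced appropriately; I would recompute $\int_{\mathbb T} e^{Q/D}\,dy = \hat K$ and present the honest expression $\bar q_1 = \frac{2D\rho^2}{K\hat K^2}\int_{\mathbb T}e^{Q/D}dy = \frac{2D\rho^2}{K\hat K}$ (positive), flagging the printed sign as a typo.

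For $\bar J_1$, recall $J_1(x,y) = c(x,y)\,\nabla_y\Xi_1(x,y) = -V'(x)\,\Xi_1'(x,y)$, where $\Xi_1$ solves~\eqref{Eq:CellProblemCLT}: $\mathcal{L}^1_x\Xi_1 = -(\lambda_1(x,y)-\bar\lambda_1(x))$ with $\lambda_1(x,y) = (1+\chi'(y))c(x,y) = -V'(x)(\rho/\hat K)e^{Q(y)/D}$. So the scalar ODE is $D\Xi_1'' - Q'\Xi_1' = V'(x)\big((\rho/\hat K)e^{Q/D} - \rho^2/(K\hat K)\big)$, which I integrate twice using the integrating factor $e^{-Q/D}$ exactly as for $\chi$, imposing $\rho$-periodicity of $\Xi_1'$ (solvability is guaranteed since the right side averages to zero against $\mu^1$) and then $\int_{\mathbb T}\Xi_1\,\mu^1 = 0$ to fix the remaining constant. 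Substituting the resulting $\Xi_1'$ into $\bar J_1(x) = -V'(x)\int_{\mathbb T}\Xi_1'(x,y)\,\mu^1(dy)$ and simplifying — the double-integral structure $\int_{\mathbb T}[(1 - (\rho/\hat K)e^{Q(y)/D})\int_{\mathbb T\cap\{z\le y\}}(1 - (\rho/K)e^{-Q(z)/D})\,dz]\,dy$ comes precisely from the two successive integrations — yields the stated expression (modulo the same sign-bookkeeping caveat). The main obstacle, and the only place real care is needed, is this explicit integration of the $\Xi_1$ cell problem on the circle: getting the constants of integration right from the two normalization conditions, handling the indefinite integral $\int^y$ on the torus (which is only well-defined because the integrand has zero mean), and tracking all multiplicative constants $K, \hat K, \rho, D$ and signs through to the final double integral. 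Everything else is a mechanical transcription of Theorem~\ref{T:CLT2}.
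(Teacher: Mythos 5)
Your proposal is correct and follows exactly the route the paper intends: the paper omits the proof as ``a straightforward application of Theorem~\ref{T:CLT2}'', and your verification of the setup (Gibbs invariant measure, automatic centering, $\Psi\equiv 0$) together with the explicit one-dimensional integrations of the cell problems for $\chi$ and $\Xi_1$ reproduces the stated $\bar\lambda$, $\bar q_1$ and $\bar J_1$. You are also right that the printed minus sign in $\bar q_1=-\frac{2D\rho^{2}}{K\hat K}$ is a typo, since the computation gives the positive value $\frac{2D\rho^{2}}{K\hat K}$, as it must for a diffusion coefficient.
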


The proof is a straightforward application of Theorem \ref{T:CLT2} and thus omitted. Notice that unless $Q=0$ or $\ell=\infty$, the term  $\bar{J}_{1}(x)\neq 0$ has non zero contribution in the limiting fluctuation process.

Next, we study the related conditional exit law, the result is in Theorem \ref{T:ConditionalExitTime}. From now on we assume that the initial point is $x^{\epsilon}_{0}=x_{0}$ and restrict the
analysis to dimension $d=1$ and to taking first $\delta\downarrow 0$ with $\epsilon$ fixed and then taking $\epsilon\downarrow 0$. Essentially, this corresponds to the case $\ell=\infty$.

Let us assume for concreteness that $V(x)$ is strictly convex, has unique minimum at $x=z_{0}$ such that $V(z_{0})=V'(z_{0})=0$, $V(x)>0$
for $x\neq z_{0}$ and $V'(x)\neq 0$ for $x\neq z_{0}$. Without loss of generality we assume that $z_{0}=0$. Consider an interval
$I=[x_{-},x_{+}]$ containing $x_{0}$ and assume that $0<x_{-}$. In Figure \ref{F:Figure1a}, we see a simple example of such a potential function.

\begin{figure}
[ptb]
\begin{center}
\includegraphics[height=2.8253in,width=3.1782in]{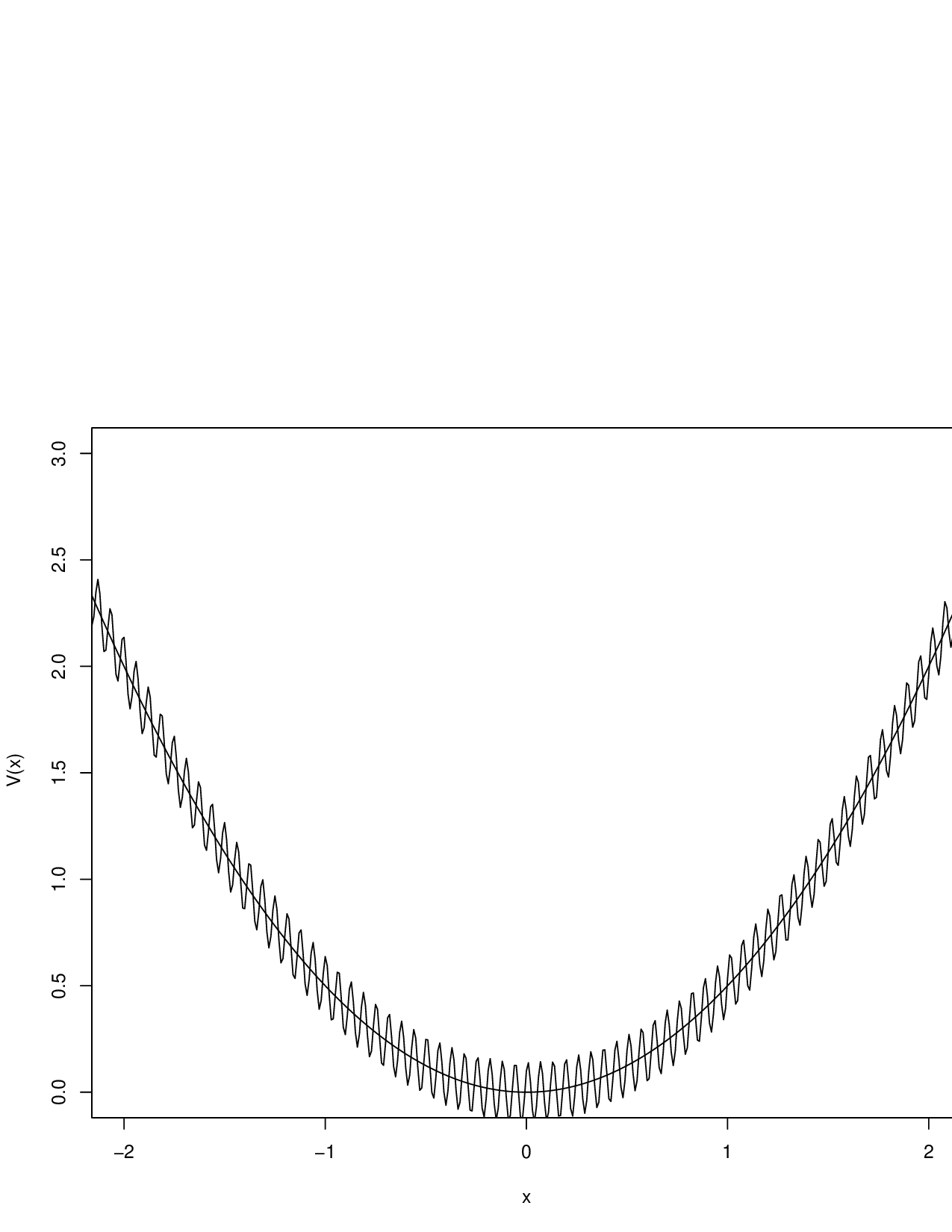}
\caption{$Q(x/\delta)=\cos(x/\delta)+\sin(x/\delta)$, $V(x)=\frac{1}{2}x^{2}$ with $\epsilon=0.1,\delta=0.01$.}%
\label{F:Figure1a}
\end{center}
\end{figure}

Let us define the exit time
\[
\tau^{\epsilon, \delta}=\inf\left\{  t\geq0:\x(t)\notin(x^{-},x^{+})\right\},
\]
and consider the event $B^{\epsilon, \delta}=\left\{ \x_{\tau^{\epsilon,\delta}}=x_{-}\right\}$. From the assumptions of $V$, it follows that $\lim_{\eps \to 0} \Pp \left\{ B^{\epsilon,\delta} \right\}=0.$ Large deviations for such and more general processes of similar structure have been studied in~\cite{DupuisSpiliopoulos}. Our goal in this section is to study the behavior of the exit problem from $I$ for this process conditioned on the rare event $B^{\epsilon,\delta}$ when $\delta$ goes to zero much faster than $\epsilon$ and provide a limit theorem using Theorem~\ref{thm: Main}. With some abuse of notation, we shall denote this process as $\x |_{ B^{\epsilon,\delta} }$.
In Remark \ref{R:RandomEnvironment} we discuss the case when  $Q$ is a stationary and ergodic random field.

We investigate how the fast oscillations of the small perturbation function $Q(y)$ affect the conditional exit law.
We first derive the process to which $\x|_{B^{\epsilon,\delta} }$ converges to by first taking $\delta\downarrow 0$ with $\epsilon$ fixed and then taking $\epsilon\downarrow 0$. We do this using the Feller characterization of one-dimensional diffusion processes given in~\cite{Feller} and the related weak convergence results of \cite{FreidlinWentzell1994}. These results are recalled for the convenience of the reader in Appendix~\ref{S:Prelim}.

In order to formulate our results we need to introduce some more notation. Let us define the 1-dimensional torus that the fast motion takes place as $\mathbb{T}^{1}$. We shall use the notation,
\begin{eqnarray}
 \left< g \right>=\frac{1}{\rho}\int_{\mathbb{T}^{1}}g(z)dz
\end{eqnarray}
for the mean value of a periodic function $g$ with period $\rho$.

\begin{theorem}\label{T:ConditionalProcess}
Let $S>0$ be given. Given the assumptions made on the functions $Q(y)$ and $V(x)$ above, we have that conditioned on $B^{\epsilon,\delta}$, the process $\x$ converges weakly, as $\delta\downarrow 0$ with $\epsilon$  fixed, in the space of continuous function $\mathcal{C}\left([0,S];\mathbb{R}\right)$ to a process which at least up to the time that it exits the interval $I=[x_{-},x_{+}]$, satisfies

\begin{equation}
d\hat{X}^{\epsilon}_{t}=\left[\frac{V'\left(\hat{X}^{\epsilon}_{t}\right)}{\left<e^{-\frac{Q}{D}}\right>\left<e^{\frac{Q}{D}}\right>}+\Psi^{\epsilon}(\hat{X}^{\epsilon}_{t})\right]dt
+\sqrt{\epsilon}\frac{\sqrt{2D}}{\sqrt{\left<e^{-\frac{Q}{D}}\right>\left<e^{\frac{Q}{D}}\right>}}dW_{t}, \quad \hat{X}_{0}=x_{0}
\end{equation}
where the function $\Psi^{\epsilon}(x)=o(\epsilon)$ as $\epsilon\downarrow 0$ uniformly in $x$.
\end{theorem}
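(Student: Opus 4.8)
The plan is to reduce the conditioned process $\x|_{B^{\epsilon,\delta}}$ to a one-dimensional diffusion via Doob's $h$-transform, and then pass to the limit $\delta\downarrow 0$ (with $\epsilon$ fixed) using the Feller scale/speed characterization recalled in Appendix~\ref{S:Prelim}. First I would fix $\epsilon>0$ and observe that, for fixed $\epsilon$, $\x$ is a time-homogeneous one-dimensional diffusion on $I=[x_-,x_+]$ with generator $\mathcal L^{\epsilon,\delta}=\tfrac12\,2\epsilon D\,\partial_x^2 + \big(-\tfrac{\epsilon}{\delta}Q'(x/\delta)-V'(x)\big)\partial_x$. Let $h^{\epsilon,\delta}(x)=\Pp_x\{\x_{\tau^{\epsilon,\delta}}=x_-\}$ be the harmonic function for the exit problem; it is the (unique, up to scaling) increasing-to-the-left solution of $\mathcal L^{\epsilon,\delta}h=0$ with $h(x_-)=1$, $h(x_+)=0$, so $h^{\epsilon,\delta}(x)=\big(s^{\epsilon,\delta}(x_+)-s^{\epsilon,\delta}(x)\big)/\big(s^{\epsilon,\delta}(x_+)-s^{\epsilon,\delta}(x_-)\big)$, where $s^{\epsilon,\delta}$ is the scale function of $\x$, namely $s^{\epsilon,\delta}{}'(x)=\exp\!\big(-\int^x \tfrac{1}{\epsilon D}(-\tfrac{\epsilon}{\delta}Q'(y/\delta)-V'(y))\,dy\big)=e^{Q(x/\delta)/D}\exp\!\big(\tfrac1{\epsilon D}\int^x V'(y)\,dy\big)$ up to a multiplicative constant. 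The conditioned process $\x|_{B^{\epsilon,\delta}}$ is then (standard Doob transform for one-dimensional diffusions, see \cite{FreidlinWentzell1994}) the diffusion with generator $\mathcal L^{\epsilon,\delta}_h f = \tfrac{1}{h}\mathcal L^{\epsilon,\delta}(hf)$, i.e. the same diffusion coefficient $2\epsilon D$ and modified drift $-\tfrac{\epsilon}{\delta}Q'(x/\delta)-V'(x)+2\epsilon D\,\partial_x\log h^{\epsilon,\delta}(x)$, run until exit from $I$.

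Next I would compute the scale and speed measures of this $h$-transformed diffusion and take $\delta\downarrow0$. A convenient fact is that the $h$-transform has scale function $\tilde s^{\epsilon,\delta}$ with $\tilde s^{\epsilon,\delta}{}'= (h^{\epsilon,\delta})^{-2} s^{\epsilon,\delta}{}'$ and speed measure $\tilde m^{\epsilon,\delta}(dx) = (h^{\epsilon,\delta}(x))^{2} m^{\epsilon,\delta}(dx)$, where $m^{\epsilon,\delta}(dx)=\big(\epsilon D\, s^{\epsilon,\delta}{}'(x)\big)^{-1}dx$ is the speed measure of $\x$. Because $h^{\epsilon,\delta}$ is a difference of scale-function values, and the scale function has the product form $e^{Q(x/\delta)/D}\cdot(\text{slowly varying in }x)$, one checks that as $\delta\downarrow 0$ the rapidly oscillating factor $e^{Q(x/\delta)/D}$ averages: $\int_a^b e^{\pm Q(y/\delta)/D}\,dy \to (b-a)\,\langle e^{\pm Q/D}\rangle$ uniformly on compacts, by the standard averaging lemma for periodic functions (this is exactly where the constants $\langle e^{-Q/D}\rangle$ and $\langle e^{Q/D}\rangle$ enter). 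Consequently $h^{\epsilon,\delta}$ converges uniformly on $I$ to the harmonic function $h^{\epsilon,0}$ of the \emph{averaged} diffusion whose scale function derivative is $\propto \exp\!\big(\tfrac1{\epsilon D}\int^x V'\big)$, and $\tilde s^{\epsilon,\delta}{}'$, $\tilde m^{\epsilon,\delta}$ converge (in the appropriate weak sense on $I$, after the oscillatory factors average) to the scale and speed data of the diffusion generated by $\tfrac12 \tilde\sigma_\epsilon^2\partial_x^2 + \tilde b_\epsilon \partial_x$ with $\tilde\sigma_\epsilon^2 = 2\epsilon D/\big(\langle e^{-Q/D}\rangle\langle e^{Q/D}\rangle\big)$ — which matches $\big(\sqrt{\epsilon}\sqrt{2D}/\sqrt{\langle e^{-Q/D}\rangle\langle e^{Q/D}\rangle}\big)^2$ as claimed. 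Then Theorem~\ref{Th:FW_Appendix}-type weak-convergence results of \cite{FreidlinWentzell1994,Feller} (convergence of scale and speed measures implies weak convergence of the diffusions up to exit from $I$, with matching initial condition $x_0$) give the weak convergence of $\x|_{B^{\epsilon,\delta}}$ in $\mathcal C([0,S];\R)$ to $\hat X^\epsilon$, at least up to the exit time from $I$.

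Finally I would identify the limiting drift and the error term $\Psi^\epsilon$. Reading off the drift from the limiting scale and speed measures gives $\tilde b_\epsilon(x) = \tilde\sigma_\epsilon^2(x)\,\partial_x\log\big(\tilde m_\epsilon\text{-density}\big)/2 + (\text{scale-derived term})$; carrying out the computation with the explicit averaged scale function $\propto e^{(1/\epsilon D)\int^x V'}$ and the averaged $h^{\epsilon,0}=\big(\tilde s^{\epsilon,0}(x_+)-\tilde s^{\epsilon,0}(x)\big)/(\cdots)$, the leading term is $V'(x)/\big(\langle e^{-Q/D}\rangle\langle e^{Q/D}\rangle\big)$, with all contributions of the $h$-transform correction $2\epsilon D\,\partial_x\log h^{\epsilon,0}$ being $O(\epsilon)$ relative to it (since $\partial_x\log h^{\epsilon,0}$ is exponentially localized near $x_-$ and bounded by $C/(\epsilon D)\cdot e^{-c/\epsilon}$-type estimates on the bulk, giving an $o(\epsilon)$ contribution after multiplication by $2\epsilon D$); collecting everything not in the leading term into $\Psi^\epsilon(x)$ yields $\Psi^\epsilon(x)=o(\epsilon)$ uniformly on $I$. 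The main obstacle I expect is the interchange of the two limits hidden in the statement — one must justify that averaging in $\delta$ (for fixed $\epsilon$) of the oscillatory scale factors is uniform enough that the $h$-transformed scale and speed measures genuinely converge, and that the Doob transform and the averaging commute; this is where the Feller characterization plus the quantitative flow/averaging estimates (the uniform convergence $\int e^{\pm Q(\cdot/\delta)/D}\to \langle e^{\pm Q/D}\rangle$ together with uniform positivity bounds on $h^{\epsilon,\delta}$ away from $x_-$) do the real work, and where care is needed because the drift $-\tfrac\epsilon\delta Q'(x/\delta)$ blows up pointwise as $\delta\to0$ even though its net effect is a bounded change of scale.
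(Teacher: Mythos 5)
Your overall architecture (Doob $h$-transform of the conditioned process, computation of its Feller scale/speed data, periodic averaging of the oscillatory factors $e^{\pm Q(\cdot/\delta)/D}$ as $\delta\downarrow 0$, and the Freidlin--Wentzell convergence criterion for $D_{v^{\epsilon,\delta}}D_{u^{\epsilon,\delta}}$ operators) is exactly the route the paper takes (it imports the $h$-transformed generator from Lemma 3 of the Almada--Bakhtin reference and then averages the explicit $u^{\epsilon,\delta}, v^{\epsilon,\delta}$). The gap is in your final identification of the drift. You claim simultaneously that the limiting drift is $+V'(x)/\bigl(\left<e^{-Q/D}\right>\left<e^{Q/D}\right>\bigr)$ and that the $h$-transform correction $2\epsilon D\,\partial_x\log h^{\epsilon,0}$ is negligible ($o(\epsilon)$ in the bulk, ``exponentially localized''). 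These two claims are incompatible: the homogenized \emph{unconditioned} drift is $\bar\lambda(x)=-V'(x)/\bigl(\left<e^{-Q/D}\right>\left<e^{Q/D}\right>\bigr)$, so if the Doob correction really vanished you would obtain the limit equation with drift $-V'/\bigl(\left<e^{-Q/D}\right>\left<e^{Q/D}\right>\bigr)$, not the one in the theorem. In the intended (rare-event, against-the-flow) conditioning, $h^{\epsilon,0}$ is itself exponentially varying across the whole interval, so $\partial_x\log h^{\epsilon,0}\sim V'(x)/(\epsilon D)$ in the bulk by a Laplace-type estimate on the averaged scale function $s'\propto e^{\frac{1}{\epsilon D}\int_0^x V'}$; multiplying by the diffusion coefficient gives a correction $\approx 2V'(x)/\bigl(\left<e^{-Q/D}\right>\left<e^{Q/D}\right>\bigr)$, which is $O(1)$ and is precisely what reverses the sign of the drift. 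This is visible in the paper's computation: after averaging, the Doob factor in $u^{\epsilon}$ is $\exp\{-2\ln|e^{-\frac{1}{\epsilon}\int_0^y\Psi}-1|\}$ with $\Psi=-V'/D$, and the expansion $\ln(e^{A}-1)=A+O(e^{-A})$ shows that this factor contributes the full drift reversal at leading order, while only the exponentially small remainder is absorbed into $\Psi^{\epsilon}(x)=o(\epsilon)$.

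So the missing idea is not technical polish but the correct accounting of the conditioning: the whole point of the theorem is that the Doob correction survives the $\delta\downarrow 0$ averaging at order one and flips $-V'$ into $+V'$, with the $o(\epsilon)$ (in fact exponentially small) discrepancy being the $\Psi^{\epsilon}$ of the statement. To repair your argument, keep your scale/speed computation ($\tilde s'=(h^{\epsilon,\delta})^{-2}s'$, $\tilde m = (h^{\epsilon,\delta})^{2}m$), but evaluate $h^{\epsilon,0}$ and its logarithmic derivative via the Laplace asymptotics of the averaged scale function rather than discarding it; also note that with the paper's literal geometry ($0<x_-<x_0<x_+$, drift $-V'$) the event $\{\x_{\tau^{\epsilon,\delta}}=x_-\}$ would be the typical exit, so your statement that $\partial_x\log h^{\epsilon,0}$ is localized ``near $x_-$'' should in any case be checked against which endpoint is the rare one; the paper's own sign conventions are not entirely consistent here, but the proof it gives unambiguously uses the non-negligible, drift-reversing correction.
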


The proof of this theorem is deferred to the end of this section. With this result at hand, Theorem \ref{thm: Main}, implies the following result for the limiting distribution of the conditional exit time $\tau^{\epsilon,\delta}$.

\begin{theorem}\label{T:ConditionalExitTime}
Let
\[
T(x_0) = \left<e^{-\frac{Q}{D}}\right>\left<e^{\frac{Q}{D}}\right> \int_{x_0}^{x_-} \frac{dy}{V^\prime (y) } < \infty.
\]
Given the assumptions made on the functions $Q(y)$ and $V(x)$ above, we have that conditioned on $B^{\epsilon,\delta}$, the distribution of $\frac{1}{\sqrt{\epsilon}}(\tau^{\epsilon,\delta}-T(x_{0}))$ converges weakly, as first $\delta\downarrow 0$ and then $\epsilon\downarrow0$ to a Gaussian random variablee with mean zero and variance given by
\begin{equation}
2D\left|\left<e^{-\frac{Q}{D}}\right>\left<e^{\frac{Q}{D}}\right>\right|^{2}\int_{x_{0}}^{x_{+}}\frac{1}{(V'(z))^{3}}dz.\label{Eq:LimitingVariance}
\end{equation}
\end{theorem}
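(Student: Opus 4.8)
\textbf{Proof plan for Theorem~\ref{T:ConditionalExitTime}.}
The plan is to obtain this as a direct corollary of Theorem~\ref{thm: Main} applied to the one-dimensional limiting process $\hat X^\eps$ produced by Theorem~\ref{T:ConditionalProcess}. First I would record that, after letting $\delta\downarrow0$ with $\eps$ fixed, the conditioned process $\x|_{B^{\eps,\delta}}$ converges weakly on $\mathcal{C}([0,S];\R)$ to $\hat X^\eps$ solving
\[
d\hat X^\eps_t=\left[\frac{V'(\hat X^\eps_t)}{\langle e^{-Q/D}\rangle\langle e^{Q/D}\rangle}+\Psi^\eps(\hat X^\eps_t)\right]dt+\sqrt\eps\,\frac{\sqrt{2D}}{\sqrt{\langle e^{-Q/D}\rangle\langle e^{Q/D}\rangle}}\,dW_t,\quad\hat X^\eps_0=x_0,
\]
with $\Psi^\eps=o(\eps)$ uniformly. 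This is exactly an SDE of the form~\eqref{Eq:LDPandA1} with no fast oscillation (so one may think of it as the degenerate case with trivial $y$-dependence), with $c(x)=V'(x)/(\langle e^{-Q/D}\rangle\langle e^{Q/D}\rangle)$, diffusion coefficient $\sigma^2=2D/(\langle e^{-Q/D}\rangle\langle e^{Q/D}\rangle)$, and the $\Psi^\eps$-term playing the role of the $\eps^{\alpha_1/2}\Psi^\eps$ perturbation with $\alpha_1>1$ (so it does not enter the limit). Here the effective drift is $\bar\lambda(x)=c(x)=V'(x)/(\langle e^{-Q/D}\rangle\langle e^{Q/D}\rangle)$, which is negative for $x>0$ under the convexity assumptions on $V$; hence the deterministic orbit $\bar X_t$ started at $x_0>0$ decreases and crosses $x_-$ in finite time $T(x_0)$, with $T(x_0)=\int_0^{T(x_0)}dt=\int_{x_0}^{x_-}\frac{dy}{\bar\lambda(y)}\cdot(\text{sign})$, i.e. precisely $T(x_0)=\langle e^{-Q/D}\rangle\langle e^{Q/D}\rangle\int_{x_0}^{x_-}\frac{dy}{V'(y)}$, which is finite and positive by the hypotheses. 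The crossing of the boundary point $x_-$ is transversal since $\bar\lambda(x_-)\neq0$, so the geometric hypotheses of Theorem~\ref{thm: Main} hold with $M=\{x_-\}$ (a point in $d=1$), $z=x_-$, and projections $\pi^i v=v/\bar\lambda(x_-)$.

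Next I would compute the noise rate and the limiting Ornstein--Uhlenbeck fluctuation. Since $\sigma$ is nondegenerate and $\alpha_1,\alpha_2>1$ (the initial condition is now exactly $x_0$, so effectively $\alpha_2=\infty$, and $\Psi^\eps=o(\eps)$ means $\alpha_1>1$), we are in the case $m=1/2$, $\beta^\eps=\sqrt\eps$, and $\ell=\infty$ (no $\bar J$ term, since there is no averaging: $\Xi\equiv0$ and $J\equiv0$ in the non-oscillating situation). Theorem~\ref{T:CLT2}/\ref{L:Lemma5_regimes2_3} then gives
\[
\bar\eta_t=\Phi_{x_0}(t)\int_0^t\Phi_{x_0}(s)^{-1}\bar q^{1/2}(\bar X_s)\,dW_s,\qquad \bar q(x)=\sigma^2=\frac{2D}{\langle e^{-Q/D}\rangle\langle e^{Q/D}\rangle},
\]
where $\Phi_{x_0}(t)=\exp\!\big(\int_0^t \bar\lambda'(\bar X_s)\,ds\big)$ is the linearization of the flow. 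By Theorem~\ref{thm: Main} with $\zeta$ any value $>1/2$ (so $\eps^{-\zeta}\theta^\eps\to1$ is vacuous here, there being no $\theta^\eps$; formally one treats this as $\ell<\infty$ with $\beta=m=1/2$), we get
\[
\frac{\tau^{\eps,\delta}-T(x_0)}{\sqrt\eps}\ \xrightarrow{d}\ -\pi\,\bar\eta_T=-\frac{\bar\eta_{T(x_0)}}{\bar\lambda(x_-)}
\]
(the spatial component is trivial in $d=1$). The right-hand side is a centered Gaussian random variable, being a time-changed Itô integral of a deterministic integrand.

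Finally I would identify the variance. Writing $g(s)=\bar\lambda'(\bar X_s)$, $\Phi(t)=\exp(\int_0^t g)$, the variance of $\bar\eta_{T}$ is $\Phi(T)^2\int_0^{T}\Phi(s)^{-2}\bar q(\bar X_s)\,ds$, so
\[
\mathrm{Var}\!\left(\frac{\bar\eta_{T}}{\bar\lambda(x_-)}\right)=\frac{\bar q\,\Phi(T)^2}{\bar\lambda(x_-)^2}\int_0^{T}\Phi(s)^{-2}\,ds,
\]
using that $\bar q$ is constant. The routine step is then to convert this time integral to a space integral along the orbit. Since $\dot{\bar X}_s=\bar\lambda(\bar X_s)$ and $g(s)=\bar\lambda'(\bar X_s)$, one has $\frac{d}{ds}\log|\bar\lambda(\bar X_s)|=\bar\lambda'(\bar X_s)=g(s)$, hence $\Phi(s)=|\bar\lambda(\bar X_s)|/|\bar\lambda(x_0)|$ and $\Phi(T)=|\bar\lambda(x_-)|/|\bar\lambda(x_0)|$; a substitution $z=\bar X_s$, $dz=\bar\lambda(\bar X_s)\,ds$ turns $\int_0^{T}\Phi(s)^{-2}ds$ into $|\bar\lambda(x_0)|^2\int_{x_-}^{x_0}\bar\lambda(z)^{-3}dz$ (with the sign bookkeeping matching the decreasing orbit), and everything collapses — the $|\bar\lambda(x_0)|$ factors cancel — to
\[
\mathrm{Var}=\bar q\int_{x_0}^{x_+}\!\!\big(\text{equivalently }x_-\big)\ \frac{\bar\lambda(x_-)^2}{\bar\lambda(z)^3}\cdot\frac{1}{\bar\lambda(x_-)^2}\,dz=\frac{2D}{\langle e^{-Q/D}\rangle\langle e^{Q/D}\rangle}\int \frac{dz}{\bar\lambda(z)^3},
\]
and plugging $\bar\lambda(z)=V'(z)/(\langle e^{-Q/D}\rangle\langle e^{Q/D}\rangle)$ yields
\[
2D\,\big|\langle e^{-Q/D}\rangle\langle e^{Q/D}\rangle\big|^2\int_{x_0}^{x_+}\frac{dz}{(V'(z))^3},
\]
which is~\eqref{Eq:LimitingVariance}. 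I would also remark on the iterated-limit subtlety: Theorem~\ref{T:ConditionalProcess} gives weak convergence $\delta\downarrow0$ first, to $\hat X^\eps$, and the above applies Theorem~\ref{thm: Main} to the family $\hat X^\eps$ as $\eps\downarrow0$; one must check that the conditioning event and the exit functional pass to the limit compatibly, i.e. that $\tau^{\eps,\delta}$ restricted to $B^{\eps,\delta}$ converges as $\delta\downarrow0$ to the exit time of $\hat X^\eps$ from $I$, which follows from continuity of the exit map at orbits that cross the boundary transversally (as in Lemma~\ref{lemma: X_before_after} and Lemma~8 of~\cite{SergioBakhtin2011}) together with the fact that $\hat X^\eps$ exits through $x_-$. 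The main obstacle is precisely this interchange/justification that conditioning on the rare event $B^{\eps,\delta}$ commutes with the $\delta\downarrow0$ limit and produces exactly the process $\hat X^\eps$ up to the exit time — the variance computation itself is just the deterministic change of variables sketched above.
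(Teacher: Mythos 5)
Your plan is the same as the paper's own proof: the paper likewise obtains the theorem by combining Theorem \ref{T:ConditionalProcess} (the $\delta\downarrow 0$ limit of the conditioned process) with Theorem \ref{thm: Main} applied for $i=1$ to the resulting one-dimensional SDE, concluding that $\frac{1}{\sqrt{\epsilon}}(\tau^{\epsilon,\delta}-T(x_0))$ converges to $-\frac{1}{\bar\lambda(\cdot)}\hat\Phi_{x_0}(T(x_0))\int_0^{T(x_0)}\hat\Phi_{x_0}^{-1}(s)\bar\sigma\,d\hat W_s$, a centered Gaussian. The only difference is that the paper stops there and defers the variance identification to the proof of Theorem 2 of \cite{SergioBakhtin2011}, whereas you carry out that computation explicitly (via $\hat\Phi_{x_0}(s)=|\bar\lambda(\bar X_s)|/|\bar\lambda(x_0)|$ and the substitution $z=\bar X_s$), which is exactly the omitted step and is done correctly; you also flag the $\delta$-then-$\epsilon$ iterated-limit issue, which the paper handles implicitly through Theorem \ref{T:ConditionalProcess} and the reference to \cite{SergioBakhtin2011}.

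One bookkeeping caveat: you assert that the effective drift $\bar\lambda(x)=V'(x)/\left(\left<e^{-Q/D}\right>\left<e^{Q/D}\right>\right)$ of the conditioned process is negative for $x>0$ so that the orbit decreases and exits at $x_-$. Since $V$ is convex with minimum at $0<x_-$, one has $V'(x)>0$ on $I$, so the conditioned drift is positive and the deterministic orbit increases and exits at $x_+$ — consistent with the variance integral $\int_{x_0}^{x_+}(V'(z))^{-3}dz$ and with the paper's sketch, which evaluates $\bar\lambda$ at $x_+$. Your confusion is inherited from the paper's own inconsistent labeling (the rare event $B^{\epsilon,\delta}$ is written as exit at $x_-$, and $T(x_0)$ is written with endpoint $x_-$, although the typical exit under the original dynamics is $x_-$ and the rare, against-the-flow exit is $x_+$), and it does not affect the structure of your argument or the final formula; but the endpoints and signs should be fixed consistently (exit point $x_+$, $T(x_0)$ and the variance both integrated from $x_0$ to $x_+$) rather than left as ``equivalently $x_-$''.
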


\begin{proof}
We only give a sketch of the main arguments, since based on Theorems \ref{T:ConditionalProcess} and \ref{thm: Main}, the proof follows along the lines of Theorem 2 in \cite{SergioBakhtin2011}.  For notational convenience let us define
\[
\bar{\sigma}=\frac{\sqrt{2D}}{\sqrt{\left<e^{-\frac{Q}{D}}\right>\left<e^{\frac{Q}{D}}\right>}}
\]
and recall that
\[
\bar{\lambda}(x)=-\frac{V'\left(\hat{X}^{\epsilon}_{t}\right)}{\left<e^{-\frac{Q}{D}}\right>\left<e^{\frac{Q}{D}}\right>}.
\]

Let us define $\hat{X}_{t}$ to be the solution to the ODE
\[
\frac{d}{dt}\hat{X}_{t}=-\bar{\lambda}\left(\hat{X}_{t}\right), \quad \hat{X}_{0}=x_{0}
\]
and let $\hat{\Phi}_{x_{0}}(t)$ solving the ODE
\[
\frac{d}{dt}\hat{\Phi}_{x_{0}}(t)=-\bar{\lambda}^{'}\left(\hat{X}_{t}\right)\hat{\Phi}_{x_{0}}(t), \quad \hat{\Phi}_{x_{0}}(0)=1
\]

By Theorem \ref{T:ConditionalProcess} and applying Theorem \ref{thm: Main} for $i=1$, we obtain that
\[
\frac{1}{\sqrt{\epsilon}}(\tau^{\epsilon,\delta}-T(x_{0}))\rightarrow -\frac{1}{\bar{\lambda}(x_{+})}\hat{\Phi}_{x_{0}}\left(T(x_{0})\right)\int_{0}^{T(x_{0})}\hat{\Phi}^{-1}_{x_{0}}\left(s\right)\bar{\sigma}d\hat{W}_{s},
\]
weakly, as first $\delta\downarrow 0$ and then $\epsilon\downarrow 0$. Thus, the limit is a centered Gaussian random variable. The rest of the proof amounts to proving that the variance of this Gaussian random variable reduces to (\ref{Eq:LimitingVariance}); this is done in a similar situation in the proof of Theorem 2 of  \cite{SergioBakhtin2011} and thus omitted. This concludes the sketch of the proof of the theorem.
\end{proof}

The results hold in the case of a random environment. In particular we have the following remark.
\begin{remark}\label{R:RandomEnvironment}
Even though we have stated  Theorems \ref{T:ConditionalProcess} and \ref{T:ConditionalExitTime} only for a periodic function $Q(y)$, the proof of Theorem \ref{T:ConditionalProcess} below immediately shows that the statements are true also when $Q(y)$ is  a stationary, ergodic
random field defined on some probability space $(\Psi,\mathcal{G},\nu)$. For
every $\omega\in\Psi$, $Q(y,\omega)$ is $\mathcal{C}^{2}(\mathbb{R})$ in
$y$ with bounded and Lipschitz continuous derivatives up to order 2. In particular, in this case we have
\begin{equation*}
\left<e^{-\frac{Q}{D}}\right>=E^{\nu}\left[e^{-\frac{Q(y)}{D}}\right],\quad \left<e^{\frac{Q}{D}}\right>=E^{\nu}\left[e^{\frac{Q(y)}{D}}\right]
\end{equation*}
where $E^{\nu}$ is expectation under the random environment.  In the case of Theorem \ref{T:ConditionalExitTime}, it seems plausible to prove that the convergence is weak in $\mathcal{C}\left([0,T];\mathbb{R}\right)$, in probability with respect to $\nu$.
\end{remark}

From Theorems \ref{T:ConditionalProcess} and \ref{T:ConditionalExitTime}, we can get some interesting conclusions on the effect of the small but fast oscillations $\epsilon Q(x/\delta)$ on the underlying potential $V(x)$. We have the following remark

\begin{remark}\label{R:ConclusionsRoughPotential}
Theorem \ref{T:ConditionalExitTime} gives a second order approximation for $\tau^{\epsilon,\delta}$ conditioned on $B^{\epsilon,\delta}$ when $\epsilon,\delta$ are small. We notice that compared to the process without any fast oscillations (i.e. take $Q(y)=0$), the standardized limiting conditional exit law has variance multiplied by the constant $\left|\left<e^{-\frac{Q}{D}}\right>\left<e^{\frac{Q}{D}}\right>\right|^{2}$. By H\"{o}lder's inequality, it is easy to see that
\[
\left<e^{-\frac{Q}{D}}\right>\left<e^{\frac{Q}{D}}\right>\geq 1
\]
Therefore, the limiting conditional variance has been enhanced by the factor $\left|\left<e^{-\frac{Q}{D}}\right>\left<e^{\frac{Q}{D}}\right>\right|^{2}$ due to the fast oscillations.
\end{remark}

We conclude this section with the proof of Theorem \ref{T:ConditionalProcess}.
\begin{proof}[Proof of Theorem \ref{T:ConditionalProcess}]
By Lemma 3 in \cite{SergioBakhtin2011} we know that  conditioned on $B^{\epsilon,\delta}$, $\x $ behaves, for each $\epsilon,\delta>0$, as a diffusion process with infinitesimal generator $\tilde{L}^{\epsilon,\delta}$ given by
\begin{equation*} \tilde{L}^{\epsilon,\delta}= b^{\epsilon,\delta}\left(x,\frac{x}{\delta}\right)\partial_x + \epsilon D \partial^2_x,
\end{equation*}
where
\begin{equation*}
b^{\epsilon,\delta}\left(x,\frac{x}{\delta}\right)= -\frac{\epsilon}{\delta}Q'\left(\frac{x}{\delta}\right)-V'\left(x\right)+2 \epsilon D \frac{h^{\epsilon,\delta}(x)}{\int_{0}^{x}h^{\epsilon,\delta}(y)dy}
\end{equation*}
and
\begin{equation*}
h^{\epsilon,\delta}(x)=\exp \left \{\frac{1}{\epsilon D }\int_{0}^{x}\left[\frac{\epsilon}{\delta}Q'\left(\frac{y}{\delta}\right)+V'\left(y\right)\right]dy \right \}.
\end{equation*}

Then, as it is ease to see, the operator $\tilde{L}^{\epsilon,\delta}$ can be equivalently written in the $D_{v^{\epsilon,\delta}}D_{u^{\epsilon,\delta}}$
characterization of Feller \cite{Feller} (see Appendix~\ref{S:Prelim} for some related results from the literature). In this case, the corresponding $u^{\epsilon,\delta}(x)$ and $v^{\epsilon,\delta}(x)$ functions are defined as
\begin{align}
u^{\epsilon,\delta}(x)&=\int_{0}^{x} \exp \left\{-\frac{1}{\epsilon D}\int_{0}^{y} b^{\epsilon,\delta}(z,\frac{z}{\delta}) dz \right\} dy, \text{ and }\nonumber\\
v^{\epsilon,\delta}(x)&=\int_{0}^{x}\frac{1}{\epsilon D} \exp \left \{\frac{1}{\epsilon D}\int_{0}^{y} b^{\epsilon,\delta}(z,\frac{z}{\delta})dz \right \}\nonumber
dy.
\end{align}

By \cite{FreidlinWentzell1994}, we know that if $u^{\epsilon}(x),v^{\epsilon}(x)$ are the limits of $u^{\epsilon,\delta}(x),v^{\epsilon,\delta}(x)$ as $\delta\downarrow 0$, then the process corresponding to the operator $D_{v^{\epsilon,\delta}}D_{u^{\epsilon,\delta}}$ will converge weakly in $\mathcal{C}\left([0,S];\mathbb{R}\right)$ to the process corresponding to the operator $D_{v^{\epsilon}}D_{u^{\epsilon}}$. Our task now is to investigate these limits.

Let us first investigate $u^{\epsilon,\delta}(x)$. To simplify notation, denote $\Phi(x)=-\frac{Q'(x)}{D}$, $\Psi(x)=-\frac{V'(x)}{D}$ and
 $\zeta(y)=\int_{0}^{y}\Phi(\rho)d\rho$. With this notation at hand, we have
\begin{align} \notag
u^{\epsilon,\delta}(x)&=\int_{0}^{x} \exp \left\{-\frac{1}{\epsilon D}\int_{0}^{y} b^{\epsilon,\delta}(z,\frac{z}{\delta}) dz \right\} dy \\ \notag
&=\int_{0}^{x} \exp \left\{-\zeta\left(\frac{y}{\delta}\right)-\frac{1}{\epsilon}\int^{y}_{0}\Psi\left(z\right)dz \right\}
\exp \left \{-2\int^{y}_{0} \frac{h^{\epsilon,\delta}(z)}{\int_{0}^{z}h^{\epsilon,\delta}(w)dw}dz \right\}dy\\ \notag
&= \int_{0}^{x} \exp \left\{-\zeta\left(\frac{y}{\delta}\right)-\frac{1}{\epsilon}\int^{y}_{0}\Psi\left(z\right)dz\right\} \\
& \qquad  \times \exp \left \{-2\int^{y}_{0} \frac{ \exp \left \{-\zeta\left(\frac{z}{\delta}\right)-\frac{1}{\epsilon}\int^{z}_{0}\Psi\left(\rho\right)d\rho\right\} }
{\int_{0}^{z}\exp \left\{ -\zeta\left(\frac{w}{\delta}\right)-\frac{1}{\epsilon}\int^{w}_{0}\Psi\left(\rho\right)d\rho \right\}dw}dz \right\}dy. \label{eqn: uepsdeltaLong}
\end{align}
By the mean value theorem we know that for a periodic function $g\in L^{a}(\mathcal{Y})$, for $a\geq 1$,
we have that $g\left(\frac{x}{\delta}\right)\rightharpoonup \left< g \right>$ in $L^{a}_{\textrm{loc}}(\mathbb{R})$ as $\delta\downarrow 0$. The convergence is in the weak sense in the spaces of functions for any arbitrary bounded interval in $\mathbb{R}$. Using this on~\eqref{eqn: uepsdeltaLong}, we have that as $\delta\downarrow 0$,  $u^{\epsilon,\delta}(x)\rightarrow u^{\epsilon}(x)$, where
\begin{align} \notag
u^{\epsilon}(x)&=\int_{0}^{x} \left<e^{-\zeta}\right> \exp \left \{-\frac{1}{\epsilon}\int^{y}_{0}\Psi\left(z\right)dz\right \}
\exp \left \{-2\int^{y}_{0} \frac{\left<e^{-\zeta}\right>e^{-\frac{1}{\epsilon}\int^{y}_{0}\Psi\left(z\right)dz}}
{\int_{0}^{z}\left<e^{-\zeta}\right>e^{-\frac{1}{\epsilon}\int^{w}_{0}\Psi\left(\rho\right)d\rho}dw}dz \right \}
 dy\\
&=\int_{0}^{x} \left<e^{-\zeta}\right> \exp \left \{-\frac{1}{\epsilon}\int^{y}_{0}\Psi\left(z\right)dz \right \}
\exp \left \{-2   \ln \left|e^{-\frac{1}{\epsilon}\int^{y}_{0}\Psi\left(z\right)dz} -1\right| \right \}
 dy.\label{eqn: ueps1}
\end{align}
Due to our assumptions, we have that  $\int^{y}_{0}\Psi\left(z\right)dz<0$ for $y\geq 0$. Thus, it is easy to see that as $\epsilon\downarrow 0$
\begin{equation*}
\ln\left(e^{-\frac{1}{\epsilon}\int^{y}_{0}\Psi\left(z\right)dz} -1 \right)=-\frac{1}{\epsilon}\int^{y}_{0}\Psi\left(z\right)dz+\frac{1}{2 \epsilon} \Psi^{\epsilon}(y)
\end{equation*}
where $\Psi^{\epsilon}(y)=o(\epsilon)$ uniformly in $y$. Hence, this and~\eqref{eqn: ueps1} implies that as  $\epsilon\downarrow 0$ we have
\begin{align*}
u^{\epsilon}(x)&=\left<e^{-\zeta}\right>\int_{0}^{x} \exp \left\{-\frac{1}{\epsilon}\int^{y}_{0}\Psi\left(z\right)dz \right\} \exp \left\{-2 \ln\left(e^{-\frac{1}{\epsilon}\int^{y}_{0}\Psi\left(z\right)dz} -1 \right) \right\}   dy\nonumber\\
&=\left<e^{-\zeta}\right>\int_{0}^{x} \exp \left \{-\frac{1}{\epsilon}\int^{y}_{0}\Psi\left(z\right)dz\right\}
\exp \left \{-2 \left(-\frac{1}{\epsilon}\int^{y}_{0}\Psi\left(z\right)dz +\frac{1}{2 \epsilon}\Psi^{\epsilon}(y)\right) \right\}   dy\nonumber\\
&=\left<e^{-\zeta}\right>\int_{0}^{x} \exp \left \{-\frac{1}{\epsilon}\left(-\int^{y}_{0}\Psi\left(z\right)dz + \Psi^{\epsilon}(y)\right) \right\}   dy\nonumber
\end{align*}

On the other hand, in exactly the same way we get that $v^{\epsilon,\delta}(x)\rightarrow v^{\epsilon}(x)$, $\delta \downarrow 0$, where
\begin{equation}
v^{\epsilon}(x)
=\left<\frac{e^{\zeta}}{\epsilon D}\right>\int_{0}^{x} \exp \left \{\frac{1}{\epsilon}\left(-\int^{y}_{0}\Psi\left(z\right)dz +\Psi^{\epsilon}(y)\right)\right\}   dy\nonumber\\
\end{equation}
with the same $\Psi^{\epsilon}(y)=o(\epsilon)$ as $\epsilon\downarrow 0$.

The limiting $u^{\epsilon}(x)$ and $v^{\epsilon}(x)$ correspond to a process characterized by the generator (see Remark~\ref{R:ReductionToSimpleCase})
\begin{equation*}
 \hat{L}^{\epsilon}=\left(\hat{b}(x)+\Psi^{\epsilon}(x)\right)\partial_x+\epsilon\hat{\alpha} \partial^2_x
\end{equation*}
where
\begin{eqnarray}
 \hat{\alpha}&=&\frac{2D}{\left<e^{-\frac{Q}{D}}\right>\left<e^{\frac{Q}{D}}\right>}\nonumber\\
\hat{b}(x)&=&\frac{V'\left(x\right)}{\left<e^{-\frac{Q}{D}}\right>\left<e^{\frac{Q}{D}}\right>}\nonumber
\end{eqnarray}
which concludes the proof of the theorem.
\end{proof}

\appendix
\section{Feller's Characterization of One Dimensional Diffusions}\label{S:Prelim}

Consider a stochastic process in one-dimensional characterized by its generator
\begin{equation}
 Lf(x)=\frac{1}{2}\alpha(x)\frac{d^{2}f}{dx^{2}}+b(x)\frac{df}{dx}\label{SmoothOperator}
\end{equation}
with smooth enough coefficients $a(x)>0$ and $b(x)$.
For the convenience of the reader, we briefly recall the Feller
characterization of all one-dimensional Markov processes, that are
continuous with probability one (for more details see \cite{Feller};
also \cite{M1}). All one-dimensional strong Markov processes that
are continuous with probability one, can be characterized (under
some minimal regularity conditions) by a generalized second order
differential operator $D_{v}D_{u}f$ with respect to two increasing
functions $u(x)$ and $v(x)$ and its domain of definition. In particular,  $u(x)$ is continuous and $v(x)$ is right
continuous. In addition, $D_{u}$, $D_{v}$ are differentiation
operators with respect to $u(x)$ and $v(x)$ respectively, which are
defined as follows:

$D_{u}f(x)$ exists if $D_{u}^{+}f(x)=D_{u}^{-}f(x)$, where the left
derivative of $f$ with respect to $u$ is defined as follows:
\begin{displaymath}
D_{u}^{-}f(x)=\lim_{h\downarrow 0}\frac{f(x-h)-f(x)}{u(x-h)-u(x)}
\hspace{0.2cm} \textrm{ provided the limit exists.}
\end{displaymath}
The right derivative $D_{u}^{+}f(x)$ is defined similarly. If $v$ is
discontinuous at $y$ then
\begin{displaymath}
 D_{v}f(y)=\lim_{h\downarrow 0}\frac{f(y+h)-f(y-h)}{v(y+h)-v(y-h)}.
\end{displaymath}
\begin{rem}\label{R:ReductionToSimpleCase}
For example, it is easy to see that the operator $L$  in
(\ref{SmoothOperator}) can be written as a $D_{v}D_{u}$ operator
with $u$ and $v$ as follows:
\begin{equation}
u(x)=\int_{0}^{x} e^{-\int_{0}^{y}\frac{2b(z)}{a(z)}dz} dy
\quad\textrm{ and }\quad
v(x)=\int_{0}^{x}\frac{2}{a(y)} e^{\int_{0}^{y}\frac{2b(z)}{a(z)}dz}
dy. \label{UandVfunctions}
\end{equation}
The representation of $u(x)$ and $v(x)$ in (\ref{UandVfunctions}) is
unique up to multiplicative and additive constants. One can
multiply one of these functions by some constant and divide the
other function by the same constant or add a constant to either of
them.

\end{rem}
Another useful result in this direction is that of \cite{FreidlinWentzell1994}, where it is proven that if we have a sequence of operators $\{D_{v^{\epsilon}}D_{u^{\epsilon}}, \epsilon>0\}$ uniquely characterizing a sequence of
Markov processes $\{X^{\epsilon},\epsilon>0\}$ such that $u^{\epsilon}(x)\rightarrow u(x)$ and  $v^{\epsilon}(x)\rightarrow v(x)$ as $\epsilon\downarrow 0$, such that
$D_{v}D_{u}$ corresponds to a strongly continuous homogeneous Markov process $X$, then
$X^{\epsilon}_{\cdot}\rightarrow X_{\cdot}$ as $\epsilon\downarrow 0$
 in distribution in $\mathcal{C}([0,T];\mathbb{R})$ for every $T>0$.

\section{Acknowledgements}
K.S. was partially supported by the National Science Foundation (DMS 1312124).

%\bibliography{bibfile}

%%%%%%%%%%%%%%%%%%%%%%%%%%%%%%%%%%%%%%%%%%%%%%%%%%%%%%%%%%%%

\end{document}